\tikzstyle{directed}=[postaction={decorate,decoration={markings,
\tikzstyle{reverse directed}=[postaction={decorate,decoration={markings,
\pgfplotsset{compat=1.11}
\newcommand{\newbold}[1]{\bgroup\contourlength{0.01em}\contour{black}{#1}\egroup}
\newtheorem{theorem}{Theorem}[section]
\newtheorem{lemma}[theorem]{Lemma}
\newtheorem{metalemma}[theorem]{Meta Lemma}
\newtheorem{proposition}[theorem]{Proposition}
\newtheorem{corollary}[theorem]{Corollary}
\theoremstyle{definition}
\newtheorem{definition}[theorem]{Definition}
\newtheorem{example}[theorem]{Example}
\newenvironment{cor}{\begin{corollary}}{
\end{corollary}}
\newcommand{\problemtitle}[1]{\gdef\@problemtitle{#1}}
\newcommand{\probleminput}[1]{\gdef\@probleminput{#1}}
\newcommand{\problemquestion}[1]{\gdef\@problemquestion{#1}}
  \par\addvspace{.5\baselineskip}
  \par\addvspace{.5\baselineskip}
\newcommand{\comment}[1]
	   {\ifthenelse{\equal{\showcomments}{yes}}
	     {\footnotemark\marginpar{\sffamily{\tiny
		   \addtocounter{footnote}{-1}\footnotemark#1
}\normalfont}}{}}
\newcommand{\showcomments}{yes}
\newcommand{\bel}[1]{\begin{equation}\label{#1}}
\newcommand{\ee}{\end{equation}}
\newcommand{\LBA}{\left\{\begin{array}}
\newcommand{\EAR}{\end{array}\right.}
\def\ovb{{\overline{b}}}
\def\NP{{\mathbf{NP}}}
\def\XP{{\mathbf{XP}}}
\def\DP{{\mathbf{DP}}}
\def\lamp{{L_2}}
\def\base{{\mathbb{Z}_2^{\mathbb{Z}}}}
\def\one{{\mathbf{1}}}
\newcommand{\gp}[1]{{\left\langle #1 \right\rangle}}
\newcommand{\gpr}[2]{{\left\langle #1 \mid #2 \right\rangle}}
\newcommand{\rb}[1]{{\left( #1 \right)}}
\newcommand{\Set}[2]{\left\{\, #1 \;\middle|\; #2 \,\right\}}
\def\MN{{\mathbb{N}}}
\def\MZ{{\mathbb{Z}}}
\DeclareMathOperator{\Aut}{{Aut}}
\DeclareMathOperator{\im}{{im}}
\DeclareMathOperator{\supp}{{supp}}
\DeclareMathOperator{\TPART}{3PART}
\DeclareMathOperator{\diam}{{diam}}
\title{Quadratic equations in the lamplighter group}
\author{Alexander Ushakov and Chloe Weiers}
\address{Department of Mathematical Sciences, Stevens Institute of Technology, Hoboken NJ 07030}\email{aushakov,cweiers@stevens.edu}
\thanks{The authors express their gratitude to the anonymous reviewer whose critique improved the quality of this article.}
\date{\today}
\begin{document}
\maketitle

\begin{abstract}
In this paper we study the complexity of solving quadratic equations in 
the lamplighter group.
We give a complete classification of cases 
(depending on genus and other characteristics of a given equation)
when the problem is $\NP$-complete or polynomial-time decidable.
We notice that the conjugacy problem can be solved in linear time.
Finally, we prove that the problem belongs to the class $\XP$.
\\
\noindent
\textbf{Keywords.}
Diophantine problem,
quadratic equations,
spherical equations, 
conjugacy problem,
metabelian groups,
lamplighter group,
wreath product,
complexity, 
NP-completeness.

\noindent
\textbf{2010 Mathematics Subject Classification.} 
20F16, 20F10, 68W30.
\end{abstract}

\section{Introduction}

Let $F = F(Z)$ denote the free group on countably many generators 
$Z = \{z_i\}_{i=1}^\infty$. For a group $G$, an 
\emph{equation over $G$ with variables in $Z$} is an equality of the form $W = 1$, where $W \in F*G$. If $W = z_{i_1}g_1\cdots z_{i_k}g_k$, with $z_{i_j}\in Z$ and $g_j\in G$, then we refer to $\{z_{i_1},\ldots,z_{i_k}\}$ as the set of \emph{variables} and to $\{g_1,\ldots,g_k\}$ as the set of \emph{constants} (or \emph{coefficients}) of $W$. We occasionally write $W(z_1,\ldots,z_k)$ or $W(z_1,\ldots,z_k;g_1,\ldots,g_k)$ to indicate that the variables in $W$ are precisely $z_1,\ldots,z_k$ and (in the latter case) the constants are precisely $g_1,\ldots, g_k$. 

A \emph{solution} to an equation $W(z_1,\ldots,z_k)=1$ over $G$ is a homomorphism $$\varphi\colon F*G\rightarrow G $$
such that $\varphi|_G = 1_G$ and $W\in \ker \varphi$. If $\varphi$ is a solution of $W=1$ and $g_i = \varphi(z_i)$, then we often say that $g_1,\ldots,g_k$ is a solution of $W=1$. We also write $W(g_1,\ldots,g_k)$ or $W(\bar{g})$ to denote the image of $W(z_1,\ldots,z_k)$ under the homomorphism $F*G \rightarrow G$ which maps $z_i \mapsto g_i$ (and restricts to the identity on $G$). Note that while some authors allow for solutions in some overgroup $H$ (with an embedding $G\hookrightarrow H$), in this article we only consider solutions in $G$. 

In this paper we assume that $G$ comes equipped with a fixed generating set $X$
and elements of $G$ are given as products of elements of $X$ and their inverses.
This naturally defines the length (or size) of the equation $W=1$ as
the length of its left-hand side $W$.

\begin{definition}
An equation $W=1$ is called \emph{quadratic} if each variable appears exactly twice (as either $z_i$ or $z_i^{-1}$).
\end{definition}

The \emph{Diophantine problem} ($\DP$) in a group $G$ for a class of equations $C$ 
is an algorithmic question to decide whether a given equation $W=1$
in $C$ has a solution. In this paper we study the class of quadratic equations 
over the standard lamplighter group $L_2$.

\subsection{Classification of quadratic equations}

We say that equations $W = 1$ and $V=1$ are \emph{equivalent} 
if there is an automorphism $\phi\in \Aut(F\ast G)$ such that 
$\phi$ is the identity on $G$ and $\phi(W) = V$. It is a well 
known consequence of the classification of compact surfaces 
that any quadratic equation over $G$ is equivalent, via an 
automorphism $\phi$ computable in time $O(|W|^2)$, to an equation 
in exactly one of the following three \emph{standard forms} (see 
\cite{Comerford_Edmunds:1981,Grigorchuk-Kurchanov:1992}):
\begin{align}
\prod_{j=1}^k z_j^{-1} c_j z_j&=1 &k\ge 1,\label{eq:spherical}\\
\prod_{i=1}^g[x_i,y_i]\prod_{j=1}^k z_j^{-1} c_j z_j&=1 &g\geq 1, k\geq 0, \label{eq:orientable}\\
\prod_{i = 1}^g x_i^2\prod_{j=1}^k z_j^{-1} c_j z_j&=1 &g \geq 1, k \geq 0.\label{eq:nonorientable}
\end{align}
The number $g$ is the \emph{genus} of the equation, and both $g$ and $k$ 
(the number of constants) are invariants. 
The standard forms are called, respectively, 
\emph{spherical}, \emph{orientable of genus $g$}, and 
\emph{non-orientable of genus $g$}.

\subsection{Previous results}

The Diophantine problem for 
quadratic equations naturally generalizes fundamental (Dehn) problems
of group theory such as the word and conjugacy problems.
Furthermore, there is a deep relationship between quadratic equations 
and compact surfaces, which makes quadratic equations an interesting
object of study.

Study of quadratic equations originated with Malcev,
who in \cite{Malcev:1962} considered some particular
quadratic equations over free groups, 
now often referred to as \emph{Malcev's equations}.
This line of research for free groups was continued:
solution sets were studied  in \cite{Grigorchuk-Kurchanov:1992}, 
$\NP$-completeness was proved in 
\cite{Diekert-Robson:1999,Kharlampovich-Lysenok-Myasnikov-Touikan:2010}.
Various classes of infinite groups were analyzed:
hyperbolic groups were studied in
\cite{Grigorchuk-Lysenok:1992,Kharlampovich-Taam:2017},
the first Grigorchuk group was studied in
\cite{Lysenok-Miasnikov-Ushakov:2016} and \cite{Bartholdi-Groth-Lysenok:2022},
free metabelian groups were studied in
\cite{Lysenok-Ushakov:2015,Lysenok-Ushakov:2021},
metabelian Baumslag--Solitar groups were studied in
\cite{Mandel-Ushakov:2023b}.
The Diophantine problem for quadratic equations over the 
lamplighter group $L_2$ 
was recently shown to be decidable by Kharlampovich, Lopez, 
and Miasnikov 
(see \cite{Kharlampovich-Lopez-Miasnikov:2020}).

\subsection{History of the study of $L_2$}

The group $\lamp$ was first studied under the name of the lamplighter group by Kaimanovich and Vershik \cite{Vershik_Kaimanovich:1983}, though it has been an object of study for much longer as a simple example of a wreath product (see \cite{Bonanome_Dean_Dean_2018} for a compact history). The standard lamplighter group $L_2$ has many different realizations, including as a dynamical system, an infinite direct sum, and notably as a self-similar group generated by a $2$-state automaton (see Grigorchuk and Żuk \cite{Grigorchuk_Zuk:2001}). Furthermore, Skipper and Steinberg \cite{skipper2019lamplightergroupsbireversibleautomata} realized general lamplighter groups $A\wr\MZ$, where $A$ is finite, as automaton groups. The construction and mechanics of the lamplighter group extend naturally beyond $\lamp$ to the generalized lamplighter group $L_n$; dead-end elements of generalized lamplighter groups were described by Cleary and Taback in \cite{Cleary_Taback:2005}, and Bartholdi and Šunik showed that generalized lamplighter groups are self-similar \cite{Bartholdi:2006}.

Complexity of solving some fundamental problems has also been a topic of study in lamplighter groups. The conjugacy search problem was explored by Sale for general wreath products in \cite{Sale:2014b} and in lamplighter groups specifically in \cite{Sale:2016}. Siehler explored the conjugacy problem and described conditions for membership in the derived subgroup of $L_n$ in \cite{Siehler:2012}. In this paper we provide a different but equivalent description of the derived subgroup of $L_2$, which is naturally more compact due to the simplicity of $\MZ_2$.

\subsection{Model of computation and representation of basic objects}

We use a random-access machine as a model of computation.
Integers are given in unary form unless stated otherwise.
Elements of $L_2$ are given as words, i.e., sequences of letters
of the group alphabet $\{a^{\pm1},t^{\pm1}\}$ for $L_2$.

\subsection{Outline}

In Section \ref{se:lamplighter} we review the definition of the
lamplighter group $L_2$ as the wreath product $\MZ_2\wr \MZ$ and discuss basic computational properties of $L_2$.
In Section \ref{se:spherical} we prove that
the Diophantine problem for spherical equations is $\NP$-complete
and employ the Knuth-Morris-Pratt algorithm to design a linear time 
algorithm for the conjugacy problem in $L_2$.
In Section \ref{se:orientable} we design a linear time 
algorithm for orientable equations of genus $g\ge 1$.
In Section \ref{se:nonorientable} we prove that the problem is $\NP$-complete
for non-orientable quadratic equations of genus $g=1$ and linear time decidable
for non-orientable quadratic equations of genus $g\ge 2$.

\section{Preliminaries: the lamplighter group}
\label{se:lamplighter}

Define a set 
$$
\base = \Set{f\colon \MZ\to \MZ_2}{|\supp(f)|<\infty}
$$
and a binary operation $+$ on $\base$ which for $f,g\in \base$ produces $f+g\in \base$ defined by
$$
(f+g)(x) = f(x)+g(x) \ \mbox{ for } \ x\in \MZ.
$$
For a nontrivial $f$ it will be convenient to define
$$
m(f)=\min_{x\in \supp(f)} x
\ \ \mbox{ and }\ \ 
M(f)=\max_{x\in \supp(f)} x.
$$
For $f\in \base$ and $b\in \MZ$, define $f^b\in \base$ by
$$
f^b(x)=f(x+b) \
\mbox{ for } \ x\in \MZ,
$$
which is a right $\MZ$-action on $\base$ because $f^0=f$ and 
$f^{(b_1+b_2)}(x)=f(x+b_1+b_2)=(f^{b_1})^{b_2}(x)$.
Hence we can consider a semidirect product $\MZ \ltimes \base$
equipped with the operation
$$
(\delta_1,f_1)(\delta_2,f_2)=
(\delta_1+\delta_2,f_1^{\delta_2}+f_2).
$$
The group $\MZ \ltimes \base$ is called a \emph{restricted wreath product} of $\MZ_2$ and $\MZ$ and is denoted by $\MZ_2\wr\MZ$. The group $\MZ_2\wr\MZ$ is also known as the \textit{lamplighter group}, 
as it can be viewed as an infinite set of lamps (each lamp indexed by an element of $\MZ$), with each lamp either on or off, and a lamplighter positioned at some lamp.
Given some element $(\delta,f)\in\lamp$, $f\in\base$ represents the configuration of illuminated lamps and $\delta\in\MZ$ represents the position of the lamplighter. Similarly, we can define $\MZ_2\wr \MZ_n$ as $\MZ_n\ltimes \MZ_2^{\MZ_n}$. Henceforth, we denote $\MZ_2\wr\MZ$ as $L_2$, the generalized lamplighter group $\MZ_n\wr \MZ$ as $L_n$, and $\MZ_2\wr \MZ_n$ by $L_2^n$.

\subsection{Mechanics of $\lamp$}

There is a natural (abelian group) isomorphism between $\base$ and the ring 
$\MZ_2[x^\pm]$ of \emph{Laurent polynomials}
with coefficients in $\MZ_2$ that maps 
$f\in \base$ to $\sum_{i=-\infty}^\infty f(i)x^i$.
Hence, the group $L_2$ can be viewed 
as $\MZ\ltimes \MZ_2[x^\pm]$ with the $\MZ$-action on $\MZ_2[x^\pm]$
defined by
$$
f^\delta = f\cdot x^{-\delta}.
$$
Similarly, we can view
$\MZ_2^{\MZ_n}$ as the quotient ring $\MZ_2[x^\pm]/\gp{x^n-1}$
and $L_2^n$ as $\MZ_n\ltimes (\MZ_2[x^\pm]/\gp{x^n-1})$. Throughout this section we use notation for
$\base$ and $\MZ_2[x^\pm]$ interchangeably 
(respectively, $\MZ_2^{\MZ_n}$ and $\MZ_2[x^\pm]/\gp{x^n-1}$), slightly abusing the notation.

For $b\ge 0$ define a (group) homomorphism $\pi_b\colon \MZ_2[x^\pm] \to \MZ_2[x^\pm]/\gp{x^b-1}$ by
$$
[\pi_b(f)](x) = \sum_{y\equiv_b x} f(y) \mbox{ for } x\in\MZ_b.
$$
For $\ovb=(b_1,\ldots,b_k)\in \MZ^k$, define 
a linear function $\lambda_{\ovb}\colon (\base)^k\to \base$
by
$$
(f_1,\ldots,f_k)
\ \ \stackrel{\lambda_{\ovb}}{\mapsto}\ \ 
(1-x^{b_1})\cdot f_1+\cdots+(1-x^{b_k})\cdot f_k,
$$ 
$b=\gcd(b_1,\ldots,b_k)$, 
and a projection homomorphism 
$\pi_\ovb=\pi_b\colon \base\to \MZ_2^{\MZ_b}$.
Throughout the paper we use the conventions that $\gcd(0,\dots,0)=0$ and $\MZ_0=\MZ$.

\begin{lemma}\label{le:poly_gcd}
If $b_1,b_2>0$
and $b=\gcd(b_1,b_2)$, then $\gcd(x^{b_1}-1,x^{b_2}-1)=x^{b}-1$.
\end{lemma}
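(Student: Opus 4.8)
The plan is to reduce the statement to an analogue of the Euclidean algorithm for the exponents. Since $\MZ_2$ is a field, $\MZ_2[x^\pm]$ is a principal ideal domain (it is the localization of the Euclidean domain $\MZ_2[x]$ at the powers of $x$), so $\gcd$ is well defined up to multiplication by a unit $x^n$; as each polynomial $x^{b_i}-1$ has nonzero constant term, I may carry out the whole computation inside $\MZ_2[x]$ and present the answer with the normalization $x^b-1$.

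First I would record the divisibility direction. If $d\mid n$ and $n=md$, then
$$
x^n-1=(x^d)^m-1=(x^d-1)\bigl(x^{(m-1)d}+\cdots+x^d+1\bigr),
$$
so $x^d-1\mid x^n-1$. Applying this with $d=b$ shows that $x^b-1$ is a common divisor of $x^{b_1}-1$ and $x^{b_2}-1$; it then remains to check that it is the greatest such.

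The key step is a polynomial identity mirroring one round of the Euclidean algorithm. Writing $b_1=qb_2+r$ with $0\le r<b_2$, one has
$$
x^{b_1}-1=x^r\bigl(x^{qb_2}-1\bigr)+\bigl(x^r-1\bigr),
$$
and since the divisibility observation gives $(x^{b_2}-1)\mid(x^{qb_2}-1)$, this yields
$$
\gcd\bigl(x^{b_1}-1,\,x^{b_2}-1\bigr)=\gcd\bigl(x^{b_2}-1,\,x^{r}-1\bigr),
$$
which is exactly the transformation $\gcd(b_1,b_2)=\gcd(b_2,r)$ on the exponents. Iterating, the exponents run through the ordinary integer Euclidean algorithm started at $(b_1,b_2)$, which terminates after finitely many steps with remainder $0$ and last nonzero value $b=\gcd(b_1,b_2)$; correspondingly the polynomial $\gcd$ is carried to $\gcd(x^b-1,\,x^0-1)=\gcd(x^b-1,\,0)=x^b-1$.

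I expect the main obstacle to be purely organizational: setting up the induction on the length of the Euclidean algorithm (equivalently, on $b_1+b_2$) so that the reduction identity is applied cleanly, and keeping track of the fact that the equalities of $\gcd$'s hold only up to the unit ambiguity in $\MZ_2[x^\pm]$. This is harmless, since every intermediate $x^{b_i}-1$ is a genuine polynomial with unit constant term, so no spurious factor of $x^n$ can appear in the normalized answer $x^b-1$.
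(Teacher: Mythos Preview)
Your proposal is correct and follows essentially the same approach as the paper: both arguments mimic the Euclidean algorithm on the exponents via the identity that reduces $\gcd(x^{b_1}-1,x^{b_2}-1)$ to a $\gcd$ with smaller exponent, iterating until $x^b-1$ is reached. The only cosmetic difference is that the paper uses the subtractive step $(x^{b_2}-1)-x^r(x^{b_1}-1)=x^r-1$ when $b_2=b_1+r$, whereas you use the full division-with-remainder form $b_1=qb_2+r$; your version also spells out the divisibility direction and the unit-normalization issue more explicitly, but the core idea is identical.
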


\begin{proof}
The statement follows from the Euclidean lemma.
Indeed, if $b_2=b_1+r$, then
$$
(x^{b_2}-1) - x^r(x^{b_1}-1) =  x^r-1,
$$
and, hence,
$\gcd(x^{b_1}-1,x^{b_2}-1)=\gcd(x^{b_1}-1,x^{r}-1)$.
That allows us to define a Euclidean-like process that terminates with $x^b-1$.
\end{proof}

\begin{cor} \label{cor:sequencemulti}
$\im(\lambda_{\ovb}) = \ker(\pi_{\ovb})$.
\end{cor}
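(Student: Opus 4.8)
The plan is to view both $\im(\lambda_{\ovb})$ and $\ker(\pi_{\ovb})$ as ideals of the Laurent polynomial ring $R=\MZ_2[x^\pm]$ and to show that each one equals the principal ideal $\gp{x^b-1}$. As $(f_1,\dots,f_k)$ ranges over $(\base)^k=R^k$, the element $\sum_{i=1}^k(1-x^{b_i})f_i$ ranges over exactly the ideal generated by $1-x^{b_1},\dots,1-x^{b_k}$, so $\im(\lambda_{\ovb})$ is this ideal. In characteristic $2$ the generators simplify: since $-1=1$ we have $1-x^{b_i}=1+x^{b_i}$, which is $x^{b_i}-1$ for $b_i>0$, the unit multiple $x^{b_i}(x^{|b_i|}-1)$ for $b_i<0$, and $0$ for $b_i=0$. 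Thus $\im(\lambda_{\ovb})=\gp{x^{|b_i|}-1:b_i\neq 0}$, and because $b=\gcd(b_1,\dots,b_k)$ is insensitive to signs and to the deletion of zero entries, I may assume all the $b_i$ are positive. For the other side, under the identification $\MZ_2^{\MZ_b}\cong R/\gp{x^b-1}$ the homomorphism $\pi_b$ is simply the quotient map, so $\ker(\pi_{\ovb})=\gp{x^b-1}$.

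It then remains to prove $\gp{x^{b_1}-1,\dots,x^{b_k}-1}=\gp{x^b-1}$ for positive $b_i$. The inclusion $\subseteq$ is immediate, since $b\mid b_i$ gives $x^b-1\mid x^{b_i}-1$ and hence each generator lies in $\gp{x^b-1}$. For $\supseteq$ it is enough to write $x^b-1$ as an $R$-combination of the $x^{b_i}-1$. Here Lemma~\ref{le:poly_gcd} does the work: the Euclidean process in its proof not only shows $\gcd(x^{b_1}-1,x^{b_2}-1)=x^{\gcd(b_1,b_2)}-1$ but exhibits $x^{\gcd(b_1,b_2)}-1$ as an explicit combination of $x^{b_1}-1$ and $x^{b_2}-1$. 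Folding in one exponent at a time, via $\gcd(b_1,\dots,b_k)=\gcd(\gcd(b_1,\dots,b_{k-1}),b_k)$, produces $x^b-1$ as an $R$-combination of the $x^{b_i}-1$; as $\im(\lambda_{\ovb})$ is an ideal, this yields $\gp{x^b-1}\subseteq\im(\lambda_{\ovb})$.

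The step I expect to require the most care is this inductive promotion of the two-variable Lemma~\ref{le:poly_gcd} to $k$ variables while keeping the gcd as an honest combination of the generators. The Euclidean process underlying the Lemma supplies the combination directly; alternatively, one can note that $R$ is a principal ideal domain, being a localization of the Euclidean domain $\MZ_2[x]$, and invoke that a finitely generated ideal is generated by the gcd of its generators. It remains to dispatch the degenerate case where every $b_i=0$: then $b=0$, each $1-x^{b_i}$ vanishes so $\im(\lambda_{\ovb})=\{0\}$, while the convention $\MZ_0=\MZ$ makes $\pi_0$ the identity, giving $\ker(\pi_{\ovb})=\{0\}=\gp{x^0-1}$; hence the two ideals again agree.
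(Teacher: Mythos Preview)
Your proof is correct and follows essentially the same approach as the paper: both identify $\im(\lambda_{\ovb})$ with the ideal generated by the $x^{b_i}-1$, use Lemma~\ref{le:poly_gcd} (together with the PID property of $\MZ_2[x^\pm]$) to reduce this to $\gp{x^b-1}$, and observe that $\ker(\pi_b)=\gp{x^b-1}$. Your version is in fact more careful than the paper's, explicitly handling negative and zero $b_i$ and the degenerate all-zero case, which the paper's proof glosses over.
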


\begin{proof}
A ring of Laurent polynomials over a field, such as $\MZ_2[x^\pm]$,
is a principal ideal domain.
By definition of $\lambda_{\ovb}$, $\im(\lambda_{\ovb})$ is the ideal generated 
by $x^{b_1}-1,\dots,x^{b_k}-1$ in $\MZ_2[x^\pm]$.
It follows from Lemma \ref{le:poly_gcd} that $\im(\lambda_{\ovb}) = \gp{x^b-1}$.
Clearly, $\ker(\pi_{\ovb})=\ker(\pi_{b})$ is the same ideal $\gp{x^b-1}$.
\end{proof}

\begin{proposition}\label{pr:transform-components}
Let
$W=\sum_{i=1}^k (1-x^{b_i})\cdot f_i + W'(f_{k+1},\dots)$,
where $b_i\in \MZ$ are constants,
$f_1,\ldots,f_k,f_{k+1},\ldots\in \base$ are variables, and 
$W'$ does not involve $f_1,\dots,f_k$.
Then
$$
W=0 \mbox{ has a solution in } \base
\ \ \Leftrightarrow\ \ 
W'=0\mbox{ has a solution in } \MZ_2^{\MZ_b},
$$
where $b=\gcd(b_1,\dots,b_k)$.
\end{proposition}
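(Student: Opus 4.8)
The statement reduces solvability of an equation in $\base$ that contains a $\lambda_{\ovb}$-type block to solvability of the residual equation $W'=0$ after projecting onto $\MZ_2^{\MZ_b}$. The plan is to treat the two directions separately, using Corollary \ref{cor:sequencemulti} as the bridge between the ideal $\gp{x^b-1}$ and the kernel of the projection $\pi_{\ovb}=\pi_b$.

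For the forward direction, suppose $(f_1,\ldots,f_k,f_{k+1},\ldots)$ is a solution of $W=0$ in $\base$, so that $\sum_{i=1}^k(1-x^{b_i})f_i + W'(f_{k+1},\dots)=0$. First I would observe that the block $\sum_{i=1}^k(1-x^{b_i})f_i$ lies in $\im(\lambda_{\ovb})$ by the very definition of $\lambda_{\ovb}$. Applying the homomorphism $\pi_{\ovb}$ to both sides and using that $\pi_{\ovb}$ kills $\im(\lambda_{\ovb})=\ker(\pi_{\ovb})$ (Corollary \ref{cor:sequencemulti}) makes the $\lambda_{\ovb}$-block vanish, leaving $\pi_{\ovb}(W'(f_{k+1},\dots))=0$ in $\MZ_2^{\MZ_b}$. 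Since $\pi_{\ovb}$ is a ring homomorphism and $W'$ is built from the $f_j$ by the ring operations, $\pi_{\ovb}$ commutes with $W'$, so $W'(\pi_{\ovb}(f_{k+1}),\dots)=0$; thus the projected values give a solution of $W'=0$ in $\MZ_2^{\MZ_b}$.

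For the reverse direction, suppose $W'(g_{k+1},\dots)=0$ has a solution with each $g_j\in\MZ_2^{\MZ_b}$. I would lift each $g_j$ to some $f_j\in\base$ with $\pi_{\ovb}(f_j)=g_j$ (any preimage under $\pi_{\ovb}$ works, e.g.\ supported on $\{0,1,\ldots,b-1\}$; for $b=0$ the map is the identity and nothing is needed). Then $\pi_{\ovb}(W'(f_{k+1},\dots))=W'(g_{k+1},\dots)=0$, so $W'(f_{k+1},\dots)\in\ker(\pi_{\ovb})=\im(\lambda_{\ovb})$, again by Corollary \ref{cor:sequencemulti}. Hence there exist $f_1,\ldots,f_k\in\base$ with $\sum_{i=1}^k(1-x^{b_i})f_i=W'(f_{k+1},\dots)$, and since we are in characteristic $2$ we have $-W'=W'$, so these $f_1,\ldots,f_k$ together with the lifts $f_{k+1},\dots$ form a solution of $W=0$ in $\base$.

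The only genuine subtlety, and the step I would treat most carefully, is the interaction of $\pi_{\ovb}$ with the residual part $W'$: one must check that $\pi_{\ovb}$ really does descend $W'$ to the correct expression over $\MZ_2^{\MZ_b}$, i.e.\ that evaluating $W'$ and then projecting agrees with projecting the variables and then evaluating. This is immediate once $W'$ is viewed purely as a $\MZ_2[x^\pm]$-linear (ring) expression in the variables and $\pi_{\ovb}$ as a ring homomorphism, but it is worth stating explicitly, together with the degenerate conventions $\gcd(0,\dots,0)=0$ and $\MZ_0=\MZ$ that handle the case $b=0$, where $\pi_{\ovb}$ is the identity and the proposition is a tautology.
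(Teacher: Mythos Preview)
Your proof is correct and follows essentially the same approach as the paper: apply $\pi_{\ovb}$ to kill the $\lambda_{\ovb}$-block in the forward direction, and lift a solution and invoke $\im(\lambda_{\ovb})=\ker(\pi_{\ovb})$ via Corollary~\ref{cor:sequencemulti} in the reverse direction. You are in fact slightly more careful than the paper in spelling out why $\pi_{\ovb}$ commutes with $W'$ and in handling the degenerate case $b=0$.
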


\begin{proof}
``$\Rightarrow$'' 
Suppose that $W=0$ has a solution in $\base$. 
Applying a homomorphism $\pi_\ovb$ we get the equation $\pi_{\ovb}(W)=0$
in which the term $\sum_{i=1}^k (1-x^{b_i})\cdot f_i$ vanishes
and that has a solution in $\MZ_2^{\MZ_b}$.

``$\Leftarrow$''
Suppose that $W'=0$ has a solution $f_{k+1}',f_{k+2}',\ldots \in \MZ_2^{\MZ_b}$. 
Choose any functions $f_{k+1}^\ast,f_{k+2}^\ast,\ldots\in \base$ 
satisfying $f_i'=\pi_\ovb(f_i^\ast)$. 
Then, since $\pi_\ovb$ is a homomorphism, for some 
$f_1^\ast,\dots,f_k^\ast \in \base$ we have
\begin{align*}
\pi_\ovb(W'(f_{k+1}^\ast,\dots))=
W'(f_{k+1}',\dots)=0
&\ \ \Rightarrow\ \ 
W'(f_{k+1}^\ast,\dots) \in \ker(\pi_\ovb)\stackrel{\ref{cor:sequencemulti}}{=} \im(\lambda_\ovb)\\
&\ \ \Rightarrow\ \ 
W'(f_{k+1}^\ast,\dots) = \sum_{i=1}^k (1-x^{b_i})\cdot f_i^\ast\\
&\ \ \Rightarrow\ \ 
\sum_{i=1}^k (1-x^{b_i})\cdot (-f_i^\ast) + W'(f_{k+1}^\ast,\dots) = 0.
\end{align*}
Thus, $W=0$ has a solution.
\end{proof}

\subsection{Several useful facts about $L_2$}
Let
$a=(0,\mathbf{1}_0)$, where $\one_0 \in \MZ_2^\MZ$ is defined by
$$
\one_0(x)=
\begin{cases}
1& \mbox{ if } x=0\\
0& \mbox{ if } x\ne0\\
\end{cases}
$$
and $t = (1,0)$, where the $0$ in the second component is
the function identical to $0$.
It is easy to see that $L_2$ is generated by $a$ and $t$, i.e.,
every element of $L_2$ can be defined as a word in the alphabet $\{a,t\}$.
It is a well-known fact that $L_2$
is not finitely presented, see \cite{Baumslag:1961}.

For a given group word $w=w(a,t)$ it is straightforward to
compute $\delta\in\MZ$ and $f\colon\MZ\to \MZ_2$ such that
the group element defined by $w$ is the same as $(\delta,f)$.
The value of $\delta$ is the sum of exponents of $t$ in $w$
and is kept in unary. The function $f$ is represented by its values
over the range $m=m(f),\dots,M=M(f)$ and is kept as a sequence of bits
$f(m),\dots,f(M)$ together with $m$ and $M$ in unary.
More specifically, we use a \emph{double-ended queue}, 
abbreviated to \emph{deque},
to ensure that the representation of $f$
is expandable left and right in linear time when needed.
If $\supp(f)=\varnothing$, then $f$ is represented by
the sequence $0$.

Let $\sigma_a\colon L_2\to \MZ_2$ and $\sigma_t\colon L_2\to \MZ$ 
be ``projection homomorphisms'' defined by
$$
\sigma_a(g) = \sum_{i=-\infty}^\infty f(i) 
\ \ \mbox{ and }\ \ 
\sigma_t(g) = \delta
$$
if $g=(\delta,f)$. 
For a given word $w=w(a,t)$ in the alphabet of $L_2$
the function $\sigma_a$ counts the sum of exponents of $a$ symbols
in $w$ modulo $2$ and the function $\sigma_t$ counts the sum of exponents of 
$t$ symbols in $w$.
Clearly, both $\sigma_a(w)$ and $\sigma_t(w)$ 
are computable in linear time $O(|w|)$.
The subgroup 
$$
Y=\Set{g\in L_2}{g=(0,f)}=\ker(\sigma_t)
$$
plays a special role in our complexity analysis in 
Sections \ref{se:spherical}, \ref{se:orientable}, and \ref{se:nonorientable}.

Finally, let us mention several useful formulae used throughout the paper:
\begin{align*}
(\delta,f)^{-1} &=(-\delta,-f^{-\delta}) =(-\delta,-x^{\delta}f),\\
(\delta,f)^2 &=(2\delta,f^{\delta}+f) =(2\delta,(x^{-\delta}+1)f),\\
(\delta,f)^{-1} (\delta_1,f_1) (\delta,f) 
&= 
(-\delta,-f^{-\delta})(\delta_1,f_1) (\delta,f)
= (\delta_1,(1-x^{-\delta_1})f+x^{-\delta}f_1),
\end{align*}
and make a simple observation that will be used multiple times
throughout the paper without explicit reference.

\begin{lemma}\label{le:Y-conjugation}
For every $g=(0,f_1)\in Y$ and $z=(\delta,f)\in L_2$,
$$
z^{-1}gz=
(\delta,f)^{-1} (0,f_1) (\delta,f) = 
(\delta,0)^{-1} (0,f_1) (\delta,0) = (0,f_1^{\delta}).
$$
\end{lemma}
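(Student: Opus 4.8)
The plan is to reduce everything to the third conjugation formula displayed just above the lemma, namely
\[
(\delta,f)^{-1}(\delta_1,f_1)(\delta,f) = \bigl(\delta_1,\,(1-x^{-\delta_1})f + x^{-\delta}f_1\bigr),
\]
specialized to the case in which the conjugated factor lies in $Y$, i.e.\ $\delta_1 = 0$. First I would substitute $\delta_1 = 0$ into this formula. The coefficient $(1-x^{-\delta_1})$ then becomes $(1 - x^{0}) = (1-1) = 0$, so the entire contribution of $f$ — the lamp configuration of the conjugating element $z$ — drops out, leaving
\[
(\delta,f)^{-1}(0,f_1)(\delta,f) = (0,\,x^{-\delta}f_1).
\]

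Next I would observe that this right-hand side does not involve $f$ at all, which immediately yields the middle equality of the lemma: setting $f=0$ in the conjugating element produces the very same value $(0,x^{-\delta}f_1)$, since the $(1-x^{0})f$ term was already annihilated. This records the conceptual point behind the statement, namely that an element of $Y$ has trivial $\MZ$-component, so conjugating it detects only the translation $\delta$ of the conjugator and is blind to its lamps.

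Finally, I would rewrite $x^{-\delta}f_1$ in the $\MZ$-action notation. Recall that the right $\MZ$-action on $\base \cong \MZ_2[x^\pm]$ is given by $f_1^{\delta} = f_1\cdot x^{-\delta} = x^{-\delta}f_1$; hence $(0,x^{-\delta}f_1) = (0,f_1^{\delta})$, which is the desired third expression. The only point requiring care is the sign convention: because the action is defined by $f^{b}(x) = f(x+b)$, corresponding to multiplication by $x^{-b}$, I must confirm that the exponent appearing is $-\delta$ rather than $+\delta$ so that the identification with $f_1^{\delta}$ is exact. Beyond that bookkeeping there is no real obstacle — the whole content of the lemma is the vanishing of the $(1-x^{0})$ coefficient.
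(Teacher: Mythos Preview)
Your argument is correct and is exactly what the paper intends: the lemma is stated there without proof, immediately after the general conjugation formula, precisely because it follows by substituting $\delta_1=0$ and noting that $(1-x^{0})f=0$ so only the $x^{-\delta}f_1=f_1^{\delta}$ term survives. There is nothing to add.
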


\section{Spherical equations}
\label{se:spherical}

In this section we study the Diophantine problem for spherical equations ($\DP_{SPH}$),
that can be formally defined as follows.
\begin{algproblem}
 \problemtitle{\textsc{The Diophantine problem for spherical equations} $(\DP_{SPH})$}
  \probleminput{Words $c_1,\dots,c_k$ in the group alphabet $\{a,t\}$ of $L_2$.}
  \problemquestion{Does equation \eqref{eq:spherical} with coefficients $c_1,\dots,c_k$ 
  have a solution?
  }
\end{algproblem}

The problem for orientable (and non-orientable) equations is defined in a similar way.

\begin{algproblem}
 \problemtitle{\textsc{The Diophantine problem  for orientable quadratic equations} $(\DP_{ORIENT})$}
  \probleminput{Words $c_1,\dots,c_k$ in the group alphabet $\{a,t\}$ of $L_2$ and $g\in\MN$.}
  \problemquestion{Does equation \eqref{eq:orientable} with coefficients $c_1,\dots,c_k$  have a solution?
  }
\end{algproblem}


We use the $3$-partition problem $(\TPART)$ as a basis for $\NP$-hardness.
Let $S=\{s_1,\ldots,s_{3k}\}$ be a multiset of $3k$ positive integers
given in unary and
$$
T_S = \frac{1}{k}\sum_{i=1}^{3k}s_i.
$$

\begin{algproblem}
 \problemtitle{\textsc{$3$-partition problem} $(\TPART)$}
  \probleminput{Multiset $S=\{s_1,\ldots,s_{3k}\}$ given in unary, 
  satisfying $T_S/4 < s_i < T_S/2$.}
  \problemquestion{Is there a partition of $S$ into $k$ triples, each of which sums to $T_S$?
  }
\end{algproblem}
The problem $\TPART$ in this form is known to be $\NP$-complete.\footnote{A thorough treatment of this problem may be found in \cite{Garey-Johnson:1979}.}

\subsection{Complexity lower bound}

Consider the following restriction of $\DP_{SPH}$.

\begin{algproblem}
 \problemtitle{\textsc{$\DP$ for spherical equations with coefficients from $Y$} $(\DP_{SPH_0})$}
  \probleminput{Words $c_1,\dots,c_k$ defining elements of $Y$.}
  \problemquestion{Does equation \eqref{eq:spherical} with coefficients $c_1,\dots,c_k$ 
  have a solution?
  }
\end{algproblem}

Below we apply the method from \cite{Mandel-Ushakov:2023b}
to reduce $\TPART$ to $\DP_{SPH_0}$ in polynomial time. 
Let $S=\{s_1,\ldots,s_{3k}\}$ be an instance of $\TPART$ and 
$T = \frac{1}{k}\sum s_i$ be the anticipated sum for subsets, where 
$T/4 < s_i < T/2$ for each $i$.
For $y\in \MN$ define elements
$$
c_y= \prod_{i=0}^{y-1} t^i a t^{-i}, \ \ \ \  
c= \prod_{i=0}^{k-1} 
t^{(T+1)i}
c_T
t^{-(T+1)i}.
$$
Notice that $c_y$ and $c$ belong to $Y$.

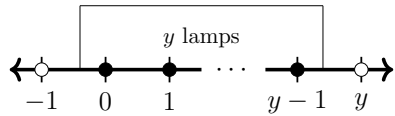
\begin{figure}[!h]
\centering
\scalebox{0.85}{
\begin{tikzpicture}
	\draw[<-,ultra thick] (-1.5,0)--(1.5,0);
        \draw[->,ultra thick] (2.5,0)--(4.5,0);
	\draw[-, thick] (-1,-0.2)--(-1,0.2);
	\draw[-, thick] (0,-0.2)--(0,0.2);
	\draw[-, thick] (4,-0.2)--(4,0.2);
	\draw[-, thick] (3,-0.2)--(3,0.2);
	\draw[-, thick] (1,-0.2)--(1,0.2);
 
	\node[draw=none] at (0,-0.5) {$0$};
        \node[draw=none] at (1,-0.5) {$1$};
        \node[draw=none] at (2,0) {$\cdots$};
        \node[draw=none] at (3,-0.5) {$y-1$};
        \node[draw=none] at (4,-0.5) {$y$};
        \node[draw=none] at (-1,-0.5) {$-1$};
	
	\filldraw[draw=black,fill=black] (1,0) circle (3pt);
        \filldraw[draw=black,fill=white] (-1,0) circle (3pt);
	\filldraw[draw=black,fill=black] (0,0) circle (3pt);
	\filldraw[draw=black,fill=black] (3,0) circle (3pt);
	\filldraw[draw=black,fill=white] (4,0) circle (3pt);
        \node[draw=none] at (1.5,0.5) {\footnotesize{$y$ lamps}};

        \draw [-] (-0.4,0) to (-0.4,1);
        \draw [-] (3.4,0) to (3.4,1);
        \draw [-] (-0.4,1) to (3.4,1);
        
	\end{tikzpicture}}
\caption{The ``lamp-configuration'' $f$ for the element $c_y$.}\label{figcy}
\end{figure}

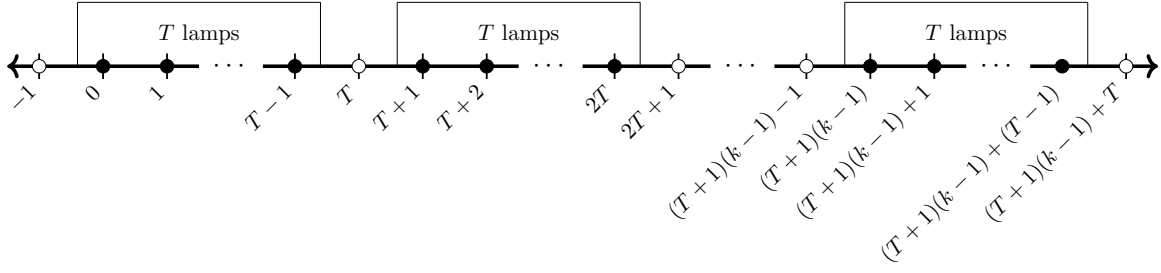
\begin{figure}[!h]
\centering
\scalebox{0.85}{
\begin{tikzpicture}
	\draw[<-,ultra thick] (-1.5,0)--(1.5,0);
        \draw[-,ultra thick] (2.5,0)--(6.5,0);
        \draw[-,ultra thick] (7.5,0)--(9.5,0);
        \draw[-,ultra thick] (10.5,0)--(13.5,0);
        \draw[->,ultra thick] (14.5,0)--(16.5,0);
	\draw[-, thick] (-1,-0.2)--(-1,0.2);
	\draw[-, thick] (0,-0.2)--(0,0.2);
	\draw[-, thick] (5,-0.2)--(5,0.2);
	\draw[-, thick] (4,-0.2)--(4,0.2);
	\draw[-, thick] (3,-0.2)--(3,0.2);
	\draw[-, thick] (1,-0.2)--(1,0.2);
        \draw[-, thick] (6,-0.2)--(6,0.2);
        \draw[-, thick] (8,-0.2)--(8,0.2);
        \draw[-, thick] (9,-0.2)--(9,0.2);
        \draw[-, thick] (11,-0.2)--(11,0.2);
        \draw[-, thick] (12,-0.2)--(12,0.2);
        \draw[-, thick] (13,-0.2)--(13,0.2);
        \draw[-, thick] (16,-0.2)--(16,0.2);

        \node[draw=none, rotate=45, anchor=east] at (-1,-0.25) {\footnotesize$-1$};
	\node[draw=none, rotate=45, anchor=east] at (0,-0.25) {\footnotesize$0$};
        \node[draw=none, rotate=45, anchor=east] at (1,-0.25) {\footnotesize$1$};
        \node[draw=none] at (1.5,0.5) {\footnotesize{$T$ lamps}};
        \node[draw=none] at (2,0) {$\cdots$};
        \node[draw=none, rotate=45, anchor=east] at (3,-0.25) {\footnotesize$T-1$};

        \node[draw=none, rotate=45, anchor=east] at (4,-0.25) {\footnotesize$T$};
        \node[draw=none, rotate=45, anchor=east] at (5,-0.25) {\footnotesize$T+1$};
        \node[draw=none, rotate=45, anchor=east] at (6,-0.25) {\footnotesize$T+2$};
        \node[draw=none] at (6.5,0.5) {\footnotesize{$T$ lamps}};
        \node[draw=none] at (7,0) {$\cdots$};
        \node[draw=none, rotate=45, anchor=east] at (8,-0.25) {\footnotesize$2T$};
        \node[draw=none, rotate=45, anchor=east] at (9,-0.25) {\footnotesize$2T+1$};

        \node[draw=none] at (10,0) {$\cdots$};
        \node[draw=none, rotate=45, anchor=east] at (11,-0.25) {\footnotesize$(T+1)(k-1)-1$};
        \node[draw=none, rotate=45, anchor=east] at (12,-0.25) {\footnotesize$(T+1)(k-1)$};
        \node[draw=none, rotate=45, anchor=east] at (13,-0.25) {\footnotesize$(T+1)(k-1)+1$};
        \node[draw=none] at (14,0) {$\cdots$};
        \node[draw=none] at (13.5,0.5) {\footnotesize{$T$ lamps}};
        \node[draw=none, rotate=45, anchor=east] at (15,-0.25) {\footnotesize$(T+1)(k-1)+(T-1)$};
        \node[draw=none, rotate=45, anchor=east] at (16,-0.25) {\footnotesize$(T+1)(k-1)+T$};

	\filldraw[draw=black,fill=black] (1,0) circle (3pt);
	\filldraw[draw=black,fill=black] (3,0) circle (3pt);
        \filldraw[draw=black,fill=white] (-1,0) circle (3pt);
	\filldraw[draw=black,fill=black] (0,0) circle (3pt);
	\filldraw[draw=black,fill=white] (4,0) circle (3pt);
	\filldraw[draw=black,fill=black] (5,0) circle (3pt);
        \filldraw[draw=black,fill=black] (6,0) circle (3pt);
        \filldraw[draw=black,fill=black] (8,0) circle (3pt);
        \filldraw[draw=black,fill=white] (9,0) circle (3pt);
        \filldraw[draw=black,fill=white] (11,0) circle (3pt);
        \filldraw[draw=black,fill=black] (12,0) circle (3pt);
        \filldraw[draw=black,fill=black] (13,0) circle (3pt);
        \filldraw[draw=black,fill=black] (15,0) circle (3pt);
        \filldraw[draw=black,fill=white] (16,0) circle (3pt);

        \draw [-] (-0.4,0) to (-0.4,1);
        \draw [-] (3.4,0) to (3.4,1);
        \draw [-] (-0.4,1) to (3.4,1);

        \draw [-] (4.6,0) to (4.6,1);
        \draw [-] (8.4,0) to (8.4,1);
        \draw [-] (4.6,1) to (8.4,1);

        \draw [-] (11.6,0) to (11.6,1);
        \draw [-] (15.4,0) to (15.4,1);
        \draw [-] (11.6,1) to (15.4,1);
	\end{tikzpicture}
}
\caption{
The ``lamp-configuration'' $f$ for the element
$c$ consists of $k$ clusters of $T$ lit lamps, each cluster separated by a single unlit lamp.}\label{figc}
\end{figure}

\begin{proposition}\label{pr:tpart}
$S=\{s_1,\ldots,s_{3k}\}$ is a positive instance of $\TPART$ if and only if
\begin{equation}\label{eq:spherical_special}
    \prod_{i=1}^{3k} z_i^{-1}c_{s_i}z_i=c
\end{equation}
has a solution.
\end{proposition}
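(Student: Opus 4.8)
The plan is to reduce the group equation \eqref{eq:spherical_special} to a purely combinatorial statement about placing blocks of consecutive lit lamps on $\MZ$, and then to recognize that statement as $\TPART$. The key leverage is that \emph{every} coefficient lies in $Y$. A direct computation gives $c_y=(0,\,1+x+\cdots+x^{y-1})$ and $c=(0,\,\sum_{j=0}^{k-1}x^{(T+1)j}(1+x+\cdots+x^{T-1}))$, so both sides of \eqref{eq:spherical_special} lie in $Y$. Writing a candidate solution as $z_i=(\delta_i,g_i)$ and applying Lemma~\ref{le:Y-conjugation}, we get
$$
z_i^{-1}c_{s_i}z_i=(0,\,x^{-\delta_i}(1+x+\cdots+x^{s_i-1})).
$$
Since $Y$ is abelian (isomorphic to $(\MZ_2[x^\pm],+)$), the lamp-component $g_i$ is irrelevant and only $\delta_i=\sigma_t(z_i)$ matters. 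Hence \eqref{eq:spherical_special} is solvable if and only if there exist integers $n_1,\dots,n_{3k}$ (set $n_i=-\delta_i$) with
$$
\sum_{i=1}^{3k}x^{n_i}\bigl(1+x+\cdots+x^{s_i-1}\bigr)=\sum_{j=0}^{k-1}x^{(T+1)j}\bigl(1+x+\cdots+x^{T-1}\bigr)\quad\text{in }\MZ_2[x^\pm].
$$
Reading each summand $x^{n_i}(1+\cdots+x^{s_i-1})$ as a block of $s_i$ consecutive lit lamps starting at position $n_i$, and the right-hand side as the target of $k$ clusters of $T$ lit lamps separated by single gaps (Figure~\ref{figc}), the question becomes: can blocks of lengths $s_1,\dots,s_{3k}$ be placed so that their $\MZ_2$-sum equals the target?

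For the direction ($\Leftarrow$), given a partition of $S$ into triples each summing to $T$, I would place the three blocks of each triple consecutively inside one target cluster. Since their lengths sum to exactly $T$, they tile that cluster without overlap, so their sum over $\MZ_2$ reproduces the cluster exactly; doing this for all $k$ clusters reproduces the target and yields a solution (take $z_i=t^{-n_i}$).

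The harder direction is ($\Rightarrow$), which I expect to be the main obstacle. Suppose the block identity holds. The total length $\sum_i s_i=kT$ equals the number of lit lamps in the target. For each position $p$ let $m_p$ be the number of blocks covering $p$; then $\sum_p m_p=kT$, while the number of lit lamps in the sum is $|\{p:m_p\text{ odd}\}|\le|\{p:m_p\ge1\}|\le\sum_p m_p=kT$. As the target has exactly $kT$ lit lamps, both inequalities are equalities, forcing $m_p\in\{0,1\}$ for all $p$; thus the blocks are pairwise disjoint and their union is exactly the target lit set. Because each block is an interval of consecutive integers and the clusters are separated by unlit gaps, every block lies entirely inside a single cluster. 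Grouping blocks by cluster partitions $\{1,\dots,3k\}$ into sets $A_0,\dots,A_{k-1}$ with $\sum_{i\in A_j}s_i=T$.

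The final step invokes the density constraint $T/4<s_i<T/2$ built into $\TPART$: from $s_i>T/4$ we get $|A_j|<4$, and from $s_i<T/2$ we get $|A_j|>2$, so $|A_j|=3$ for every $j$, producing the required $3$-partition. The crux is the counting argument that upgrades an $\MZ_2$-identity into genuine disjointness of the blocks; once disjointness holds, the interval geometry together with the $T/4<s_i<T/2$ bounds finishes the proof cleanly.
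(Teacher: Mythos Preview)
Your proposal is correct and follows essentially the same approach as the paper: reduce via Lemma~\ref{le:Y-conjugation} to an identity of shifted blocks in $\MZ_2[x^\pm]$, use a support-counting argument (your $m_p$ version is exactly the paper's $|\supp(\sum)|\le\sum|\supp|$ inequality unpacked) to force disjointness, then invoke the interval structure and the $T/4<s_i<T/2$ bounds to get triples. One cosmetic point: your direction labels $(\Leftarrow)$ and $(\Rightarrow)$ are swapped relative to the statement as written (``positive instance $\Leftrightarrow$ equation solvable''), so the direction you call harder is the paper's ``$\Leftarrow$''.
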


\begin{proof}
``$\Rightarrow$''
If $S$ is a positive instance of $\TPART$, then, without loss of generality, 
we may assume that $\sum_{i=1}^3 s_{3j+i}=T$ for $j=0,1,\ldots,k-1$.
Let $T_i=(T+1)i$ for $1\le i\le 3k$. Then clearly $z_i=t^{\Delta_i}$ for $i=1,\ldots,3k$ is a solution to \eqref{eq:spherical_special}, where $\Delta_i$'s are defined as:
\begin{align*}
\Delta_1&=0 &\Delta_2&=-s_1 &\Delta_3&=-s_1-s_2\\
\Delta_4&=-T_1 &\Delta_5&=-T_1-s_4 &\Delta_6&=-T_1-s_4-s_5\\
\Delta_7&=-T_2 &\Delta_8&=-T_2-s_7 &\Delta_9&=-T_2-s_7-s_8\\
&\cdots &&\cdots &&\cdots 
\end{align*}

``$\Leftarrow$''
Conversely, assume there exists an assignment $z_i=(\delta_i,f_i)$
satisfying \eqref{eq:spherical_special}. By design,
$c_i\in Y$ for every $i=1,\dots,3k$.
Hence, using Lemma \ref{le:Y-conjugation}, we may assume that 
$z_i=(\delta_i,0)$ and can translate \eqref{eq:spherical_special} 
into the equality $\sum_{i=1}^{3k} f_{c_i}^{\delta_i} = f_c$, which, in particular, 
implies that
$$
\supp\Bigl(\sum_{i=1}^{3k}f_{c_i}^{\delta_i}\Bigr) = \supp(f_c).
$$
Notice that 
$$
\Bigl|\supp\Bigl(\sum_{i=1}^{3k}f_{c_i}^{\delta_i}\Bigr)\Bigr|
\ \le\ 
\sum_{i=1}^{3k} |\supp(f_{c_i}^{\delta_i})|=
\sum_{i=1}^{3k} |\supp(f_{c_i})|=
kT = |\supp(f_c)|,
$$
where the first inequality is strict if and only if 
$\supp(f_{c_i}^{\delta_i}) \cap \supp(f_{c_j}^{\delta_j}) \ne \varnothing$
for distinct $i,j$. Therefore, the sets
$\supp(f_{c_1}^{\delta_1}),\dots,\supp(f_{c_{3k}}^{\delta_{3k}})$
are pairwise disjoint and, viewed geometrically, form a tiling of $\supp(f_c)$.

As shown in Figure \ref{figc}, $\supp(f_c)$ consists of $k$ sequences
of integers each of length $T$ 
$$
\supp(f_c)=
\underbrace{\{0,\dots,T-1\}}_{S_1}\ \sqcup\ 
\underbrace{\{T+1,\dots,2T\}}_{S_2}\ \sqcup\ 
\underbrace{\{2T+2,\dots,3T+1\}}_{S_3} \ \sqcup\ 
\cdots
$$
separated by single missing numbers
between those sequences. On the other hand, the sets
$\supp(f_{c_i}^{\delta_i})$ have no gaps.
Therefore, shifts $\delta_1,\dots,\delta_{3k}$ tile
each sequence $S_l$ in $\supp(f_c)$ by sets $\supp(f_{c_i}^{\delta_i})$, i.e.,
$$
S_l = 
\supp(f_{c_i}^{\delta_i})\ \sqcup\ 
\supp(f_{c_j}^{\delta_j})\ \sqcup\ \dots \sqcup\ 
\supp(f_{c_k}^{\delta_k}),
$$
which implies that 
$$
T=|S_l| = |\supp(f_{c_i}^{\delta_i})|+
|\supp(f_{c_j}^{\delta_j})|+\dots+
|\supp(f_{c_k}^{\delta_k})|=
c_i+c_j+\dots+c_k
$$
and proves that $S$ is a positive instance of $\TPART$
(it is easy to see that the condition $T/4 < s_i < T/2$ for each $i$
imposed on $S$ implies that each $T$ is expressed as a sum of exactly three
$c_i$'s).
\end{proof}

\begin{corollary}\label{co:SPH-0-hard}
$\DP_{SPH_0}$ (and $\DP_{SPH}$) is $\NP$-hard.
\end{corollary}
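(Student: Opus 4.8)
The plan is to read off the $\NP$-hardness of $\DP_{SPH_0}$ directly from Proposition \ref{pr:tpart}, which already contains the heart of a reduction from $\TPART$. Since $\TPART$ is $\NP$-complete, it suffices to verify that the map sending an instance $S$ to the coefficient list of equation \eqref{eq:spherical_special} is a polynomial-time many-one reduction whose output is a genuine instance of $\DP_{SPH_0}$, that is, a spherical equation in the standard form \eqref{eq:spherical} all of whose coefficients lie in $Y$.

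Two bookkeeping points then need attention. First, equation \eqref{eq:spherical_special} is written as $\prod_{i=1}^{3k} z_i^{-1} c_{s_i} z_i = c$ rather than in the homogeneous form \eqref{eq:spherical}. I would move $c$ to the left as $c^{-1}$ and absorb it using one fresh variable $z_0$, rewriting the equation as $z_0^{-1} c^{-1} z_0 \prod_{i=1}^{3k} z_i^{-1} c_{s_i} z_i = 1$. This is a bona fide spherical equation with coefficients $c^{-1}, c_{s_1}, \dots, c_{s_{3k}}$, each of which lies in $Y$ because $c, c_{s_i} \in Y$ by construction and $Y$ is a subgroup; hence it is a legitimate $\DP_{SPH_0}$ instance. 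I would then give a short argument that solvability of this homogeneous equation is equivalent to solvability of \eqref{eq:spherical_special}: a solution of the latter with $z_0 = 1$ yields a solution of the former, and conversely a solution of the former satisfies $\prod_{i=1}^{3k} z_i^{-1} c_{s_i} z_i = z_0^{-1} c z_0$, so replacing each $z_i$ by $z_i z_0^{-1}$ produces a solution of \eqref{eq:spherical_special}.

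Second, I must confirm that the reduction runs in polynomial time. The factor $c_y = \prod_{i=0}^{y-1} t^i a t^{-i}$ has word length $O(y^2)$, and $c = \prod_{i=0}^{k-1} t^{(T+1)i} c_T t^{-(T+1)i}$ has length polynomial in $k$ and $T$. Since $S$ is given in unary, each $s_i$, as well as $k$ and $T = \frac{1}{k}\sum s_i$, is bounded by the input length $|S|$, so every coefficient word is computable in time polynomial in $|S|$.

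The only content-bearing step, Proposition \ref{pr:tpart}, is already established, so the sole remaining work is the (routine) length bookkeeping and the equivalence of the rewritten equation; I do not expect a genuine obstacle here. Combining the above with Proposition \ref{pr:tpart} gives that $S \in \TPART$ if and only if the constructed $\DP_{SPH_0}$ instance is solvable, which proves that $\DP_{SPH_0}$ is $\NP$-hard. Finally, since $\DP_{SPH_0}$ is exactly the restriction of $\DP_{SPH}$ to coefficients from $Y$, every $\DP_{SPH_0}$ instance is already a $\DP_{SPH}$ instance, so the hardness transfers and $\DP_{SPH}$ is $\NP$-hard as well.
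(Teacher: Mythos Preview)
Your proposal is correct and follows essentially the same route as the paper: invoke Proposition~\ref{pr:tpart} as a polynomial-time Karp reduction from $\TPART$, observe that the coefficients $c_{s_i}$ and $c$ lie in $Y$, and conclude $\NP$-hardness of $\DP_{SPH_0}$ (hence of $\DP_{SPH}$). You are in fact more careful than the paper, which leaves implicit the passage from the inhomogeneous form $\prod z_i^{-1}c_{s_i}z_i = c$ to the standard spherical form \eqref{eq:spherical}; your introduction of the extra conjugate $z_0^{-1}c^{-1}z_0$ and the accompanying equivalence argument close that gap cleanly.
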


\begin{proof}
All coefficients in \eqref{eq:spherical_special} belong to the subgroup $Y$.
Therefore,
the reduction discussed in Proposition \ref{pr:tpart}
is a polynomial-time Karp reduction from $\TPART$ to $\DP_{SPH_0}$.
\end{proof}

\subsection{Complexity upper bound}

Now we prove that the Diophantine problem for spherical equations \eqref{eq:spherical}
over $L_2$ belongs to $\NP$. 

\begin{proposition}\label{pr:spherical-reduction}
A spherical equation \eqref{eq:spherical} with constants 
$c_i=(\delta_{c_i},f_{c_i})$ has a solution if and only if
the following two conditions are satisfied:
\begin{itemize}
\item[(Sp1)]
$\sum_{i=1}^k \delta_{c_i} = 0$,
\item[(Sp2)]
$\sum_{i=1}^k f_{c_i}^{\delta_i}= 0$ in $\MZ_2^{\MZ_\delta}$
for some $\delta_1,\dots,\delta_k\in \MZ_\delta$,
\end{itemize}
where $\delta=\gcd(\delta_{c_1},\ldots,\delta_{c_m})$.
\end{proposition}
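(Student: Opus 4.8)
The plan is to substitute a general $z_i = (\delta_i, f_i) \in L_2$ into the product $\prod_{i=1}^k z_i^{-1}c_iz_i$, compute both coordinates in the decomposition $L_2 = \MZ\ltimes\base$, and then recognize the resulting $\base$-equation as an instance of Proposition \ref{pr:transform-components}. First I would use the conjugation formula preceding Lemma \ref{le:Y-conjugation} to record that each factor is
\[
z_i^{-1}c_iz_i = \bigl(\delta_{c_i},\,(1-x^{-\delta_{c_i}})f_i + x^{-\delta_i}f_{c_i}\bigr),
\]
so conjugation preserves the $\MZ$-component. Multiplying the factors, the $\MZ$-component of the whole product is $\sum_{i=1}^k\delta_{c_i}$, which is independent of the substitution. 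Hence the product can be trivial only if (Sp1) holds, and (Sp1) is exactly the condition on the first coordinate. This reduces the problem to the $\base$-coordinate under the assumption (Sp1).

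For the $\base$-coordinate I would expand the product using the multiplication rule $(\alpha_1,g_1)\cdots(\alpha_k,g_k)=\bigl(\sum\alpha_i,\sum_i x^{-(\alpha_{i+1}+\cdots+\alpha_k)}g_i\bigr)$. Writing $R_i = \sum_{j>i}\delta_{c_j}$ for the accumulated shift, the $\base$-component is
\[
\sum_{i=1}^k x^{-R_i}\bigl[(1-x^{-\delta_{c_i}})f_i + x^{-\delta_i}f_{c_i}\bigr]
= \sum_{i=1}^k(1-x^{-\delta_{c_i}})h_i + \sum_{i=1}^k x^{-R_i-\delta_i}f_{c_i},
\]
where I substitute $h_i := x^{-R_i}f_i$; because $x^{-R_i}$ is a unit in $\MZ_2[x^\pm]$, the $h_i$ range over all of $\base$ exactly as the $f_i$ do. Setting $W' = \sum_i x^{-R_i-\delta_i}f_{c_i}$, this is precisely the shape required by Proposition \ref{pr:transform-components}, with $b_i = -\delta_{c_i}$ (so $\gcd(b_1,\dots,b_k)=\delta$) and with the $h_i$ as the variables to be eliminated — once we first fix the integer parameters $\delta_1,\dots,\delta_k$, which turns $W'$ into a genuine constant not involving any $h_i$.

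The central step is then to invoke Proposition \ref{pr:transform-components}: for any fixed integers $\delta_i$, the $\base$-equation is solvable in the $h_i$ (equivalently in the $f_i$) if and only if $\pi_\delta(W') = 0$ in $\MZ_2^{\MZ_\delta}$. Since $\pi_\delta$ is a ring homomorphism, $\pi_\delta(W') = \sum_i x^{-R_i-\delta_i}\pi_\delta(f_{c_i})$, and as the integer $\delta_i$ ranges over $\MZ$ the exponent $-(R_i+\delta_i)\bmod\delta$ ranges over all of $\MZ_\delta$, independently for each $i$. Reparametrizing by $\delta_i' = (R_i+\delta_i)\bmod\delta$ then shows that ``$\pi_\delta(W')=0$ for some choice of the $\delta_i$'' is equivalent to ``$\sum_i f_{c_i}^{\delta_i'} = 0$ in $\MZ_2^{\MZ_\delta}$ for some $\delta_i'\in\MZ_\delta$'', which is exactly (Sp2). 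Combining the two coordinates gives the stated equivalence: the equation is solvable iff (Sp1) holds (first coordinate) and, granting this, the second coordinate can be killed for some substitution, which is (Sp2).

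The main obstacle I anticipate is the bookkeeping of the accumulated shifts $R_i$ together with the careful separation of the two kinds of unknowns. The polynomial unknowns $f_i$ are absorbed into the $h_i$ and eliminated through Proposition \ref{pr:transform-components}, whereas the integer unknowns $\delta_i$ survive into the projected constant $W'$ and must be matched against (Sp2) via the reparametrization $\delta_i'=(R_i+\delta_i)\bmod\delta$; getting the quantifier order right (fix the $\delta_i$, apply the proposition, then quantify over the $\delta_i$ externally) is the delicate point. The degenerate case $\delta=0$, where every $\delta_{c_i}=0$ and the $h_i$-terms vanish, should be checked against the conventions $\gcd(0,\dots,0)=0$ and $\MZ_0=\MZ$, but it needs no separate argument.
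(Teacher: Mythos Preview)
Your proposal is correct and follows essentially the same route as the paper: compute the conjugates, multiply them out keeping track of the accumulated shifts $R_i$ (the paper calls these $\Delta_i$), absorb the unit $x^{-R_i}$ into the polynomial variables, and then apply Proposition~\ref{pr:transform-components} for each fixed choice of the integer parameters before quantifying over them. Your explicit discussion of the quantifier order and of the degenerate case $\delta=0$ is a little more careful than the paper's write-up, but the argument is the same.
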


\begin{proof}
Let $z_i=(\delta_{z_i},f_{z_i})$.
Direct computation shows that $z_1,\dots,z_k$
satisfy equation \eqref{eq:spherical} if and only if
$$
\rb{
\sum_{i=1}^k \delta_{c_i},
\sum_{i=1}^k x^{-\Delta_i}\left[
\rb{1-x^{-\delta_{c_i}}}f_{z_i}+
x^{-\delta_{z_i}}f_{c_i}
\right]
}=(0,0)
$$
where $\Delta_i=\delta_{c_{i+1}}+\dots+\delta_{c_k}$ for $i=1,\dots,k$,
is satisfied.
That further translates to the conditions $\sum_{i=1}^k \delta_{c_i} = 0$
(which establishes (Sp1)) and
$$
\sum_{i=1}^k x^{-\Delta_i}\left[
\rb{1-x^{-\delta_{c_i}}}f_{z_i}+
x^{-\delta_{z_i}}f_{c_i}
\right]
= 0\ \mbox{ in }\ \MZ_2^\MZ.
$$
The latter condition is satisfied by 
$f_{z_i}\in \MZ_2^\MZ$ and $\delta_{z_i}\in\MZ$
if and only if the condition
\begin{equation}\label{eq:condition-step2}
\sum_{i=1}^k \left[
\rb{1-x^{-\delta_{c_i}}} f_{i}+
x^{\delta_{i}}f_{c_i}
\right]
= 0\ \mbox{ in }\ \MZ_2^\MZ
\end{equation}
is satisfied
by $f_i=x^{-\Delta_i}f_{z_i}$ and $\delta_{i}=-\delta_{z_i}-\Delta_i$.
By Proposition \ref{pr:transform-components},
for fixed $\delta_1,\dots,\delta_k\in\MZ$,
equation \eqref{eq:condition-step2} has a solution
if and only if
$\sum_{i=1}^k x^{\delta_{i}}f_{c_i} = 0$
in $\MZ_2^{\MZ_\delta}$.
Hence, equation \eqref{eq:condition-step2} has a solution
if and only if 
$\sum_{i=1}^k x^{\delta_{i}}f_{c_i} = 0$
holds for some $\delta_1,\dots,\delta_k\in\MZ_\delta$.
This proves the proposition.
\end{proof}

For a nontrivial function $f\colon\MZ\to \MZ_2$ define 
$$
\diam(f)= M(f)-m(f),
$$
and note that $\diam(f)$ is invariant under a $\delta$-shift, i.e.,
$\diam(f^{\delta}) = \diam\rb{f}$ for every $\delta\in\MZ$.
The next lemma states that the diameter of $f$ cannot exceed
the length of a shortest word $w$ describing that function.

\begin{lemma}\label{le:inv2}
For any word $w=w(a,t)$ in generators of $L_2$
defining the element $(\delta,f)$,
$$
|\delta|\le |w|
\ \mbox{ and }\ 
\diam\rb{f}\leq \left|w\right|.
$$
Furthermore,
$\diam\rb{f}\leq \left|w\right|/2-1$ for any $w=w(a,t)$ defining $(0,f)$.
\end{lemma}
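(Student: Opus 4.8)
The plan is to induct on the length of the word $w=w(a,t)$. I would first establish the two bounds $|\delta|\le|w|$ and $\diam(f)\le|w|$ simultaneously by tracking how each generator affects the element $(\delta,f)$. For the bound on $\delta$, the observation is immediate: by the earlier remark that $\sigma_t(w)=\delta$ counts the (signed) sum of $t$-exponents, each letter $t^{\pm1}$ changes $\delta$ by $\pm1$ and each letter $a^{\pm1}$ leaves $\delta$ unchanged, so $|\delta|$ is at most the number of $t$-letters, which is at most $|w|$. For the diameter bound, I would read $w$ left to right, maintaining the current lamplighter position (a running partial $t$-exponent sum $p$) and the current support. Reading a letter $a$ toggles the lamp at the current position $p$, and reading $t^{\pm1}$ moves $p$ by $\pm1$. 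Since $p$ starts at $0$ and each step moves it by at most $1$, after reading a prefix of length $\ell$ we have $|p|\le \ell$, so every lamp that is ever toggled sits at a position in the interval $[-|w|,|w|]$; more carefully, the set of positions at which $a$ is applied has diameter at most the total variation of $p$, which is bounded by the number of $t$-letters, hence $\diam(f)\le|w|$.

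For the sharper statement $\diam(f)\le|w|/2-1$ when $\delta=0$, the key point is a \emph{parity/closure} argument. If $(\delta,f)=(0,f)$ then the $t$-exponents in $w$ sum to zero, so the running position $p$ returns to $0$; in particular the lamplighter must travel from its leftmost toggled position to its rightmost and back, which forces at least $2\cdot\diam(f)$ worth of $t$-letters, and additionally at least one $a$-letter is needed to illuminate each endpoint of the support. Concretely, let $m=m(f)$ and $M=M(f)$ so $\diam(f)=M-m$. To toggle lamps at both $m$ and $M$ the walk $p$ must visit both $m$ and $M$; since the walk starts and ends at $0$, the total number of unit steps (i.e.\ $t$-letters) is at least $2(M-m)=2\diam(f)$, and we need at least two $a$-letters (one to set the lamp at $m$ and one at $M$, as these must be nonzero in $f$). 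Hence $|w|\ge 2\diam(f)+2$, which rearranges to $\diam(f)\le|w|/2-1$.

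The main obstacle will be making the walk-length lower bound fully rigorous, since the positions $m$ and $M$ need not be visited in a simple order and the walk may backtrack. I would handle this cleanly by noting that a lattice walk on $\MZ$ starting and ending at $0$ that visits both $m\le 0$ and $M\ge 0$ (or more generally whose range has diameter $M-m$) uses at least $2(M-m)$ steps, which is a standard fact: the walk must cover the interval $[m,M]$ in both the outward and return directions relative to any point, and a short counting argument (e.g.\ summing $|p_{\ell+1}-p_\ell|$ and comparing to twice the range of a closed walk) gives the factor of $2$. The only subtlety is confirming that both endpoints of $\supp(f)$ genuinely require an $a$-letter and that these are distinct letters when $m\ne M$; when $m=M$ the function is a single lamp and one checks the inequality directly. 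I expect the rest of the argument to be routine bookkeeping on prefixes of $w$.
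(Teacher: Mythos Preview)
The paper states this lemma without proof, so there is nothing to compare against; your lamplighter-walk argument is the natural one and is correct in substance. The bound $|\delta|\le|w|$ is immediate, and tracking the running $t$-exponent sum $p$ as the lamplighter's position shows (with the paper's conventions) that the absolute lamp configuration is supported in the range of the walk, whose diameter is at most the number of $t$-letters; this gives $\diam(f)\le|w|$. For the closed-walk case your lower bound is right: a walk on $\MZ$ starting and ending at $0$ and visiting both $m$ and $M$ decomposes as $0\to\{m\text{ or }M\}\to\{M\text{ or }m\}\to 0$, and in every sign configuration the three segment lengths sum to at least $2(M-m)$; together with the two distinct $a$-letters at the endpoints this yields $|w|\ge 2\diam(f)+2$ whenever $m(f)\ne M(f)$.

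One small point: the edge case $m=M$ does \emph{not} ``check directly'' as you suggest. For $w=a$ one has $(\delta,f)=(0,\mathbf{1}_0)$ with $\diam(f)=0$ and $|w|/2-1=-\tfrac12$, so the stated inequality is false there. This is a minor imprecision in the lemma as written rather than a flaw in your method; the only place the lemma is invoked (the proof of Theorem~\ref{th:spherical-NP}) uses the weaker strict inequality $\diam(f)<|w|/2$, and your argument does establish that in all cases, since $|w|\ge 2\diam(f)+2$ when $\diam(f)\ge1$ and $|w|\ge1$ when $\diam(f)=0$.
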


\begin{theorem}\label{th:spherical-NP}
The Diophantine problem for spherical equations over $\lamp$ belongs to $\NP$.
\end{theorem}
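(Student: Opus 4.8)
The plan is to exhibit a polynomial-size certificate whose validity can be checked in polynomial time, relying on the characterization of solvability in Proposition \ref{pr:spherical-reduction}. By that proposition, equation \eqref{eq:spherical} with constants $c_i=(\delta_{c_i},f_{c_i})$ is solvable if and only if conditions (Sp1) and (Sp2) hold, where $\delta=\gcd(\delta_{c_1},\ldots,\delta_{c_k})$. Condition (Sp1) is verifiable directly in linear time, since each $\delta_{c_i}=\sigma_t(c_i)$ is computable in time $O(|c_i|)$. The natural certificate for (Sp2) is the tuple of shifts $(\delta_1,\ldots,\delta_k)$ witnessing $\sum_{i=1}^k x^{\delta_i}f_{c_i}=0$ in $\MZ_2^{\MZ_\delta}$, and the whole argument reduces to showing that these shifts may be taken of polynomial size and that the resulting identity is checkable in polynomial time.

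First I would bound $\delta$. By Lemma \ref{le:inv2} each $|\delta_{c_i}|\le |c_i|$, so $\delta\le \max_i|\delta_{c_i}|\le |W|$, and $\delta$ is computable by the Euclidean algorithm in polynomial time. When $\delta>0$ the shifts range over $\MZ_\delta$ and hence may be chosen in $\{0,1,\ldots,\delta-1\}$, so each $\delta_i$ is already bounded by $|W|$. To verify (Sp2) one computes, for each $i$, the projection $\pi_\delta(x^{\delta_i}f_{c_i})$, a vector in $\MZ_2^{\MZ_\delta}$ of length $\delta\le |W|$ obtainable in time $O(|W|)$ because $\diam(f_{c_i})\le |c_i|$ by Lemma \ref{le:inv2}, sums these $k$ vectors over $\MZ_2$, and checks that the result is zero; this is polynomial in $|W|$.

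The main obstacle is the degenerate case $\delta=0$, where by convention $\MZ_\delta=\MZ$ and the shifts are a priori unbounded integers; here all $\delta_{c_i}=0$, so every $c_i\in Y$ and (Sp2) reads $\sum_{i=1}^k x^{\delta_i}f_{c_i}=0$ in $\MZ_2[x^\pm]$. I would bound the shifts by a clustering argument. Given any solution, form the graph on $\{1,\ldots,k\}$ joining $i$ and $j$ whenever the shifted supports of $x^{\delta_i}f_{c_i}$ and $x^{\delta_j}f_{c_j}$ meet. Distinct connected components cover disjoint positions, so the partial sum over each component must itself vanish, and one may translate each component independently (adding a common integer to the shifts inside it) without affecting this. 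Translating the components so that their support intervals abut, and using that the union of a family of integer intervals with connected intersection graph is again an interval of length at most the sum of the individual lengths, one concludes that all shifted supports fit inside a single interval of length $\sum_i \diam(f_{c_i})\le |W|$. Normalizing the leftmost support point to $0$ then yields a solution with every $|\delta_i|\le |W|$, so the certificate is again polynomial and the identity $\sum_{i=1}^k x^{\delta_i}f_{c_i}=0$ is checkable directly in $\MZ_2[x^\pm]$ in polynomial time.

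Combining the two cases, a solvable instance admits a polynomial-size certificate $(\delta_1,\ldots,\delta_k)$ verifiable in polynomial time, while conversely any valid certificate witnesses (Sp1)--(Sp2) and hence solvability by Proposition \ref{pr:spherical-reduction}. Therefore $\DP_{SPH}$ belongs to $\NP$.
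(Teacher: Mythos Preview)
Your proof is correct and follows essentially the same route as the paper: reduce via Proposition~\ref{pr:spherical-reduction}, observe that the case $\delta>0$ is immediate since shifts live in $\MZ_\delta$ with $\delta\le|W|$, and in the case $\delta=0$ bound the shifts by a connected-component (``cluster'') argument, using that each component sums to zero and may therefore be translated independently. The only cosmetic difference is that you pack the translated clusters side by side into a single interval of length at most $\sum_i\diam(f_{c_i})$, whereas the paper centers each cluster at the origin; either way the shifts are polynomially bounded (your stated bound $|\delta_i|\le|W|$ is off by a small constant factor once you account for the location of $\supp(f_{c_i})$, but this is immaterial for $\NP$ membership).
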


\begin{proof}
Consider a spherical equation $W=1$ of type \eqref{eq:spherical} that has a solution.
By Proposition \ref{pr:spherical-reduction}, it has a solution if and only if 
the conditions (Sp1) and (Sp2) hold.
The condition (Sp1) can be verified in linear time.
The condition (Sp2) is an equation for $\delta_{1},\dots,\delta_{k}\in\MZ_\delta$.
We claim that if (Sp2) has a solution, then it has a bounded solution
satisfying 
$$
-|W|\le \delta_{i} \le |W|,
$$
for every $i=1,\dots,k$,
which gives an efficiently verifiable certificate for \eqref{eq:spherical}.

If $\delta\ge 1$, then 
\begin{equation}\label{eq:delta_i-bound-mod-W}
0\le \delta_{i}<\delta\le |W|
\end{equation}
and the claim holds.
Therefore, we may assume that $\delta=0$.
Fix $\delta_1,\ldots,\delta_k$ satisfying (Sp2). 
Let $I=\{1,\ldots,k\}$ be the set of indices.
Define an equivalence relation $R$ as the transitive closure of the following binary relation on $I$:
$$
J=\Set{(i,j)\in I\times I}{\supp(f_{c_i}^{\delta_i})\cap\supp(f_{c_j}^{\delta_j})\neq\varnothing}.
$$
A \emph{cluster} for the sequence of functions 
$f_{c_1},\ldots,f_{c_k}$ is an equivalence class 
for the relation $R$. We can partition $I$ into a finite union of some $n$ 
clusters $C_i$, i.e. $I=C_1\sqcup \cdots\sqcup C_n$.
Note that $\sum_{i\in C}f_{c_i}^{\delta_i}=0$ for every cluster $C$. 
Hence, shifting a cluster independently of other clusters does not
change the sum of its shifted component functions. In other words,
\[
\sum_{i\notin C} f_{c_i}^{\delta_i}+\sum_{i\in C} f_{c_i}^{\delta_i+\Delta}=0,
\]
for any cluster $C$ and any $\Delta\in\MZ$. Define the support of a cluster $C$ as 
$$
\supp(C) = \bigcup_{i\in C} \supp\rb{f_{c_i}^{\delta_i}}
$$
(note that $\supp(C)\subseteq \MZ$ because $\delta=0$)
and its diameter as
\[
\diam(C)= \max \Set{x-y}{x,y\in\supp(C)}.
\]
It follows from the definition of a cluster that the diameter of a cluster $C$ 
cannot exceed the sum of the diameters of its component functions, so
$$
\diam(C)
\leq
\sum_{i\in C} \diam\rb{f_{c_i}^{\delta_i}} 
=
\sum_{i\in C} \diam\rb{f_{c_i}} 
\stackrel{\ref{le:inv2}}{<} 
\sum_{i\in C} \left|c_i\right|/2\le |W|/2.
$$
Thus, the diameter of a single cluster is bounded by the lengths of the words describing its component functions.

For each cluster $C_j$ choose a ``midpoint'' for its support
$$
p_j=\left\lfloor \tfrac{1}{2}\rb{\min \Set{x}{x\in\supp(C_j)} + \max \Set{x}{x\in\supp(C_j)}} \right\rfloor
$$
and for $i=1,\dots,k$ define
$$
\delta_i'=\delta_i-p_j\  \mbox{ if } \ i\in C_j.
$$
Since $\delta_i$ from the same cluster are shifted the same way,
we have that $\delta_1',\dots,\delta_k'$ satisfies (Sp2).
Furthermore, this transformation ``centers'' cluster supports
around the origin and, if we denote by $C_j^\ast$
the \textbf{shifted} cluster $C_j$, then
for every $i$ in that cluster we have that
\begin{equation}\label{eq:cenetered-cluster-support}
\supp\big(f_{c_i}^{\delta_i'}\big)
\ \subseteq\ 
\supp(C_j^\ast) 
\ \subseteq\ 
\left[-\tfrac{|W|}{4},\tfrac{|W|}{4}\right].
\end{equation}
Since
$$
\supp(f_{c_i}) 
\ \subseteq\ 
\left[-\tfrac{|c_i|}{2},\tfrac{|c_i|}{2}\right]
\ \subseteq\ 
\left[-\tfrac{|W|}{2},\tfrac{|W|}{2}\right]
$$
it follows that $|\delta_i'|$ cannot be greater than 
$\tfrac{|W|}{2}+\tfrac{|W|}{4}\le |W|$, because
otherwise $\delta_i'$ shifts the interval 
$\left[-|W|/2,|W|/2\right]$ containing $\supp(f_{c_i})$ 
completely outside of the target window $\left[-|W|/4,|W|/4\right]$.
Therefore, $|\delta_i'|$ is bounded by $|W|$ as claimed.

Thus, in both cases, for a positive instance of $\DP_{SPH}$ there
exists a bounded sequence $\delta_1,\dots,\delta_k$ satisfying (Sp2)
which can be checked in polynomial time and which establishes an 
$\NP$-certificate for the instance.
\end{proof}

\begin{theorem}
The Diophantine problem for spherical equations over $\lamp$ is $\NP$-complete.
\end{theorem}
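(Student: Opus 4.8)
The plan is to observe that $\NP$-completeness is, by definition, the conjunction of two separate assertions: membership in $\NP$ and $\NP$-hardness. Both halves have already been established in the preceding development, so the proof amounts to invoking them in tandem. First I would cite Theorem \ref{th:spherical-NP}, which places $\DP_{SPH}$ in $\NP$ by exhibiting the bounded sequence $\delta_1,\dots,\delta_k$ (with $-|W|\le\delta_i\le|W|$) as a polynomial-size, polynomially verifiable certificate for any positive instance. Second I would cite Corollary \ref{co:SPH-0-hard}, which supplies the $\NP$-hardness via the polynomial-time Karp reduction from $\TPART$ to $\DP_{SPH_0}$, and hence to $\DP_{SPH}$, furnished by Proposition \ref{pr:tpart}.

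The only point worth making explicit is that the hardness result was actually proved for the \emph{restricted} problem $\DP_{SPH_0}$ (coefficients drawn from $Y$), which is a special case of $\DP_{SPH}$; since every instance of $\DP_{SPH_0}$ is already an instance of $\DP_{SPH}$, hardness of the restriction immediately yields hardness of the general problem. Combining the $\NP$ upper bound with the $\NP$-hardness lower bound gives $\NP$-completeness.

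There is no genuine obstacle here, as all of the substantive work — the reduction establishing the lower bound and the certificate-size analysis (resting on the diameter bound of Lemma \ref{le:inv2} and the clustering argument) establishing the upper bound — has been carried out in the earlier results. The one thing to be careful about is simply to reference the correct statements and to note the $\DP_{SPH_0}\subseteq\DP_{SPH}$ containment so that the hardness transfers cleanly. Thus the proof is a short two-sentence assembly of Theorem \ref{th:spherical-NP} and Corollary \ref{co:SPH-0-hard}.
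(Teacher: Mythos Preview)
Your proposal is correct and matches the paper's approach exactly: the theorem is stated without proof in the paper, immediately following Theorem~\ref{th:spherical-NP} and Corollary~\ref{co:SPH-0-hard}, precisely because it is the straightforward conjunction of those two results. Your observation that $\DP_{SPH_0}\subseteq\DP_{SPH}$ transfers the hardness is the only point needing mention, and you have it right.
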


\begin{corollary}\label{co:equation1-complexity}
There is an algorithm that, for a given spherical equation \eqref{eq:spherical}, decides if it has a solution or not in time $O(|W|^k)$.
\end{corollary}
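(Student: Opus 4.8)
The plan is to convert the nondeterministic argument of Theorem~\ref{th:spherical-NP} into a deterministic exhaustive search over a space of size $O(|W|^{k})$. By Proposition~\ref{pr:spherical-reduction}, equation~\eqref{eq:spherical} is solvable if and only if conditions (Sp1) and (Sp2) hold. First I would verify (Sp1), namely $\sum_{i=1}^{k}\delta_{c_i}=0$, in time $O(|W|)$; if it fails I output ``no solution'' and halt. Otherwise I compute $\delta=\gcd(\delta_{c_1},\dots,\delta_{c_k})$ and reduce to deciding whether (Sp2), i.e.\ $\sum_{i=1}^{k} f_{c_i}^{\delta_i}=0$ in $\MZ_2^{\MZ_\delta}$, is satisfiable for some $\delta_1,\dots,\delta_k\in\MZ_\delta$.

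The key point is that the search for the shifts can be confined to a window of length $O(|W|)$ and that one shift may be fixed in advance. Condition (Sp2) is invariant under a simultaneous translation of all shifts: if $(\delta_1,\dots,\delta_k)$ satisfies (Sp2), then so does $(\delta_1+\Delta,\dots,\delta_k+\Delta)$ for any $\Delta$, since $\sum_i f_{c_i}^{\delta_i+\Delta}=\big(\sum_i f_{c_i}^{\delta_i}\big)^{\Delta}$. Hence I may assume $\delta_1=0$ (cyclically, when $\delta\ge 1$) and search only over $\delta_2,\dots,\delta_k$. For the range of these shifts I would invoke the certificate bound established inside the proof of Theorem~\ref{th:spherical-NP}: when $\delta\ge 1$ each $\delta_i$ ranges over the $\delta\le|W|$ residues of $\MZ_\delta$; when $\delta=0$ the cluster-centering argument shows that a satisfying tuple, if one exists, can be chosen with every $|\delta_i|\le |W|$, so after setting $\delta_1=0$ the remaining shifts lie in an interval of length $O(|W|)$. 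In either case there are at most $O(|W|^{\,k-1})$ tuples $(\delta_2,\dots,\delta_k)$ to examine.

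For each such tuple I would evaluate $\sum_{i=1}^{k} f_{c_i}^{\delta_i}$ and test whether it is the zero function. Because each $f_{c_i}$ is supported on an interval of length at most $|c_i|$ and $\sum_i|c_i|\le|W|$, the shifted functions can be accumulated into a working array indexed over the relevant window (of length $O(|W|)$) in total time $O\big(\sum_i |c_i|\big)=O(|W|)$, after which testing the accumulated sum for being identically zero and resetting the touched entries also costs $O(|W|)$. Thus each tuple is processed in $O(|W|)$ time, and the whole search runs in $O(|W|)+O(|W|^{\,k-1})\cdot O(|W|)=O(|W|^{k})$ time; I output ``yes'' exactly when some tuple passes the test, which is correct by Proposition~\ref{pr:spherical-reduction}.

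The only genuinely delicate ingredient is the bound confining the relevant shifts to an $O(|W|)$-window in the case $\delta=0$; this is exactly the content of the clustering and centering analysis in Theorem~\ref{th:spherical-NP}, which I would reuse rather than reprove. Everything else is bookkeeping: once the search space is known to have size $O(|W|^{\,k-1})$ and each membership test is linear, the running time $O(|W|^{k})$ follows. The use of translation invariance to eliminate one free shift is what keeps the exponent at $k$ rather than $k+1$.
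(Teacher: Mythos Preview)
Your proposal is correct and follows essentially the same approach as the paper: reduce via Proposition~\ref{pr:spherical-reduction}, verify (Sp1) in linear time, and then exhaustively search a window of $O(|W|)$ values for $k-1$ of the shifts, appealing to the cluster-centering bound from Theorem~\ref{th:spherical-NP} in the case $\delta=0$ and to the trivial bound $\delta\le|W|$ when $\delta\ne 0$. The only cosmetic difference is how one degree of freedom is eliminated: you invoke translation invariance to pin $\delta_1=0$ uniformly, whereas the paper (in the $\delta=0$ case) instead enumerates $\delta_1',\dots,\delta_{k-1}'$ and then \emph{searches} for a matching $\delta_k'$ by aligning the leftmost lit lamps of $\sum_{i<k} f_{c_i}^{\delta_i'}$ and $f_{c_k}$; in the $\delta\ne 0$ case it fixes $\delta_k=0$ just as you do. Both devices cost the same and yield the same $O(|W|^{k})$ bound.
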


\begin{proof}
By Proposition \ref{pr:spherical-reduction}, \eqref{eq:spherical} has a solution if and only if (Sp1) and (Sp2) are satisfied.
(Sp1) can be checked in linear time.
Below, we discuss (Sp2) for two cases, $\delta=0$ and $\delta\ne 0$.

\textsc{(Case-I)} Suppose that $\delta=0$.
In the proof of Theorem \ref{th:spherical-NP} we showed that if
(Sp2) is satisfied by $\delta_1,\dots,\delta_k\in\MZ$, then 
it is satisfied by $\delta_1',\dots,\delta_k'\in\MZ$ 
satisfying \eqref{eq:cenetered-cluster-support}, i.e.,
$$
\supp\big(f_{c_i}^{\delta_i'}\big)\subseteq \left[-|W|/4,|W|/4\right],
$$
which gives at most $|W|/2$ distinct values for $\delta_i'$. Denote the set of values for $\delta_i'$ by $V_i$.
Now the procedure can be organized as follows:
\begin{itemize}
\item 
For each tuple
$(\delta_1',\dots,\delta_{k-1}') \in V_1\times\dots\times V_{k-1}$:
\begin{itemize}
\item 
Compute $f = \sum_{i=1}^{k-1} f_{c_i}^{\delta_i'}$.
\item 
Test if $f = f_{c_k}^{\delta_k'}$ for some $\delta_k'$
by computing $m_1=m(f)$ and $m_2=m(f_{c_k})$ and then simultaneously scanning 
$f$ and $f_{c_k}$ pairs of elements starting from positions $m_1$ and $m_2$ respectively. 
\end{itemize}
\end{itemize}
The number of tuples $(\delta_1',\dots,\delta_{k-1}')$ is bounded by $|W|^{k-1}$.
Computation of $f$ and 
testing if $f = f_{c_k}^{\delta_k'}$ can be done in $O(|W|)$ time.
Hence the claimed time bound.

\textsc{(Case-II)} If $\delta\ne 0$ and the equation has a solution, then,
as in Theorem \ref{th:spherical-NP},
$0\le \delta\le |W|$ 
and each $\delta_i$ can take at most $|W|$ distinct values.
Clearly, in this case we may assume that $\delta_k=0$ and
the procedure can be organized as follows:
\begin{itemize}
\item 
For each tuple
$(\delta_1,\dots,\delta_{k-1}) \in \left[0,|W|\right)^{k-1}$:
\begin{itemize}
\item 
Compute $f = \sum_{i=1}^{k-1} f_{c_i}^{\delta_i}$.
\item 
Test if $\pi_\delta(f) = \pi_\delta(f_{c_k})$. 
\end{itemize}
\end{itemize}
The complexity of this algorithm is also bounded by $O(|W|^k)$.
\end{proof}

\subsection{The conjugacy problem over $L_2$}
In \cite{Sale:2016}, Sale showed that the conjugacy search problem can be solved in quadratic time over $L_n$. Below we show that it can be solved in linear time for $L_2$.

\begin{proposition}\label{pr:conjugacy-linear}
The conjugacy problem over $L_2$ is decidable in linear time.
\end{proposition}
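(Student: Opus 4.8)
The plan is to reduce conjugacy to a cyclic-equivalence test on finite binary strings, which the Knuth--Morris--Pratt (KMP) substring search resolves in linear time. First I would put both input words $g$ and $h$ into normal form $(\delta_1,f_1)$ and $(\delta_2,f_2)$, which takes linear time by the representation described in Section~\ref{se:lamplighter}. Since conjugation fixes the $\MZ$-component — the formula $(\delta,f)^{-1}(\delta_1,f_1)(\delta,f)=(\delta_1,(1-x^{-\delta_1})f+x^{-\delta}f_1)$ leaves the first coordinate unchanged — a necessary condition for conjugacy is $\delta_1=\delta_2$, which is checked in linear time using $\sigma_t$. Assuming $\delta_1=\delta_2$, set $b=|\delta_1|$.

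Next I would derive the algebraic criterion. Conjugating $(\delta_1,f_1)$ by $z=(\delta,f)$ produces the second coordinate $(1-x^{-\delta_1})f+x^{-\delta}f_1$, so $g$ and $h$ are conjugate if and only if there exist $\delta\in\MZ$ and $f\in\base$ with
$$
(1-x^{-\delta_1})f = f_2 + x^{-\delta}f_1
$$
(working in characteristic $2$). As $f$ ranges over $\base$, the left-hand side ranges over the ideal $\langle 1-x^{-\delta_1}\rangle=\langle x^{b}-1\rangle$, which by Corollary~\ref{cor:sequencemulti} equals $\ker(\pi_b)$. Hence conjugacy is equivalent to the existence of $\delta$ with $f_2+x^{-\delta}f_1\in\ker(\pi_b)$, i.e.\ $\pi_b(f_2)=x^{-\delta}\pi_b(f_1)$ in $\MZ_2^{\MZ_b}$. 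Since multiplication by $x^{-1}$ is a cyclic shift of the length-$b$ string $\pi_b(f_1)$, and $\delta$ realizes all $b$ shifts, the criterion reads: $\pi_b(f_2)$ is a cyclic rotation of $\pi_b(f_1)$. When $b=0$, the conventions $\MZ_0=\MZ$ and $\pi_0=\id$ turn this into the statement that $f_2$ is an ordinary translate of $f_1$, which is checked by trimming both functions to their supports and comparing the resulting bit strings.

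Finally I would assemble the algorithm. For $b>0$, compute $u=\pi_b(f_1)$ and $v=\pi_b(f_2)$ as binary strings of length $b$ by folding the supports modulo $b$; since $b\le|g|$ by Lemma~\ref{le:inv2} and each $\diam(f_i)$ is bounded by the length of the corresponding input word, this costs linear time. Two length-$b$ strings are cyclic rotations of one another exactly when $v$ occurs as a contiguous substring of the doubled string $uu$, and KMP decides this in time $O(b)$. All remaining steps are linear, so the whole procedure runs in time linear in $|g|+|h|$. The one nonroutine point is this last step: comparing all $b$ rotations directly would be quadratic, and the key observation is that cyclic equivalence is a circular string-matching problem solvable in linear time by running KMP on $uu$. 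Reducing conjugacy to this problem through the identification of $\langle x^b-1\rangle$ with $\ker(\pi_b)$ in Corollary~\ref{cor:sequencemulti} is the conceptual heart of the argument.
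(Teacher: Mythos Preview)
Your proposal is correct and follows essentially the same approach as the paper: after matching the $\MZ$-components, you split into the cases $\delta=0$ (translate comparison by aligning supports) and $\delta\ne 0$ (cyclic-rotation test on the $\pi_\delta$-images via KMP on the doubled string). The only cosmetic difference is that you derive the criterion directly from Corollary~\ref{cor:sequencemulti}, whereas the paper routes it through Proposition~\ref{pr:spherical-reduction}; the underlying argument and running time are the same.
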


\begin{proof}
    First, observe that a spherical equation with two conjugates
    $$
    z_1^{-1}c_1z_1z_2^{-1}c_2z_2=1 \ \ \Rightarrow \ \ z_1^{-1}c_1z_1=z_2^{-1}c_2^{-1}z_2
    $$
    is an instance of the conjugacy problem with elements $c_1$ and $c_2^{-1}$. Let $c_1=(\delta_{c_1},f_{c_1})$ and $c_2^{-1}=(\delta_{c_2},f_{c_2})$ for notational convenience. For such an equation to have a solution, clearly $\delta_{c_1}=\delta_{c_2}$ must be satisfied. The problem thus splits into two cases, depending on whether $\delta$'s are trivial. In both cases, we exclude the trivial support case, where one or both functions satisfy $\supp(f_{c_i})=\varnothing$.
    
\textsc{(Case-I)} The first case is where $\delta_{c_1}=\delta_{c_2}=0$. By Proposition \ref{pr:spherical-reduction}, (Sp1) and (Sp2) must be satisfied. The condition (Sp1) is satisfied by assumption. In this case, $\gcd(\delta_{c_1},\delta_{c_2})=0$, so (Sp2) simplifies to deciding whether there exists an element $\Delta\in\MZ$ such that $f_{c_1}=f_{c_2}^{\Delta}$ in $\base$.
Define $m_1,M_1,m_2,M_2\in\MZ$ as
$$
m_i=m(f_{c_i}), \ \ \ M_i=M(f_{c_i}).
$$
    Without loss of generality, assume that $m_1\le m_2$. Let $\Delta=m_2-m_1$. It can be determined whether $f_{c_1}=f_{c_2}^{\Delta}$ by scanning pairs of elements starting from $m_1$ to $\max(M_1,M_2)$. Clearly, this can be done in linear time.

        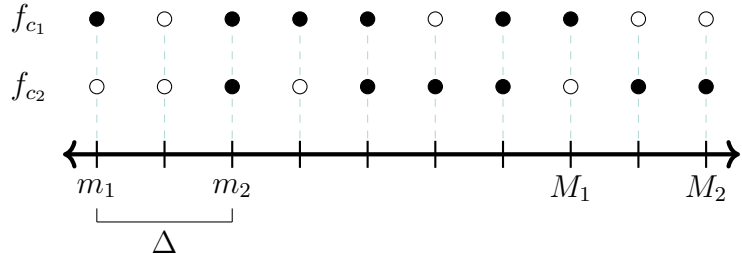
\begin{figure}[!h]
        \centering
        \begin{tikzpicture}[scale=0.9]
	\draw[help lines, dashed, ystep=0] (-5,0) grid(4,2);
	\draw[<->,ultra thick] (-5.5,0)--(4.5,0);
	\draw[-, thick] (-5,-0.2)--(-5,0.2);
	\draw[-, thick] (-4,-0.2)--(-4,0.2);
	\draw[-, thick] (-3,-0.2)--(-3,0.2);
	\draw[-, thick] (-2,-0.2)--(-2,0.2);
	\draw[-, thick] (-1,-0.2)--(-1,0.2);
	\draw[-, thick] (0,-0.2)--(0,0.2);
	\draw[-, thick] (4,-0.2)--(4,0.2);
	\draw[-, thick] (3,-0.2)--(3,0.2);
	\draw[-, thick] (2,-0.2)--(2,0.2);
	\draw[-, thick] (1,-0.2)--(1,0.2);
	\node[draw=none] at (-6,1) {$f_{c_2}$};
	\node[draw=none] at (-6,2) {$f_{c_1}$};

	\filldraw[black] (0,1) circle (3pt);
	\filldraw[black] (1,1) circle (3pt);
	\filldraw[black] (3,1) circle (3pt);
	\filldraw[black] (-1,1) circle (3pt);
	\filldraw[black] (-3,1) circle (3pt);
	\filldraw[black] (4,1) circle (3pt);
        \filldraw[draw=black,fill=white] (-5,1) circle (3pt);
        \filldraw[draw=black,fill=white] (-4,1) circle (3pt);
        \filldraw[draw=black,fill=white] (-2,1) circle (3pt);
        \filldraw[draw=black,fill=white] (2,1) circle (3pt);
	
	\filldraw[black] (-3,2) circle (3pt);
	\filldraw[black] (-2,2) circle (3pt);
	\filldraw[black] (-1,2) circle (3pt);
	\filldraw[black] (1,2) circle (3pt);
	\filldraw[black] (-5,2) circle (3pt);
	\filldraw[black] (2,2) circle (3pt);
        \filldraw[draw=black,fill=white] (-4,2) circle (3pt);
        \filldraw[draw=black,fill=white] (0,2) circle (3pt);
        \filldraw[draw=black,fill=white] (3,2) circle (3pt);
        \filldraw[draw=black,fill=white] (4,2) circle (3pt);

        \node[draw=none] at (-5,-0.5) {$m_1$};
        \node[draw=none] at (-3,-0.5) {$m_2$};
        \node[draw=none] at (2,-0.5) {$M_1$};
        \node[draw=none] at (4,-0.5) {$M_2$};

        \draw [-] (-5,-0.8) to (-5,-1);
        \draw [-] (-3,-0.8) to (-3,-1);
        \draw [-] (-5,-1) to (-3,-1);
        \node[draw=none] at (-4,-1.3) {$\Delta$};

	\end{tikzpicture}
        \caption{An example of an instance where $f_{c_1}=f_{c_2}^{\Delta}$.}\label{fig:caseI}
        \end{figure}

        \textsc{(Case-II)} The second case is where $\delta_{c_1}=\delta_{c_2}\ne 0$. Let $\delta=\delta_{c_1}$. Here the equation simplifies to $\rb{1-x^{-\delta}}\rb{f_{z_1}-f_{z_2}}=f_{c_2}^{\delta_{z_2}}-f_{c_1}^{\delta_{z_1}}$, which has a solution if and only if there exists $\Delta\in\MZ$ such that
        $\pi_\delta(f_{c_1})=\pi_\delta(f_{c_2})^{\Delta}$ in $\MZ_2^{\MZ_{\delta}}$. 
        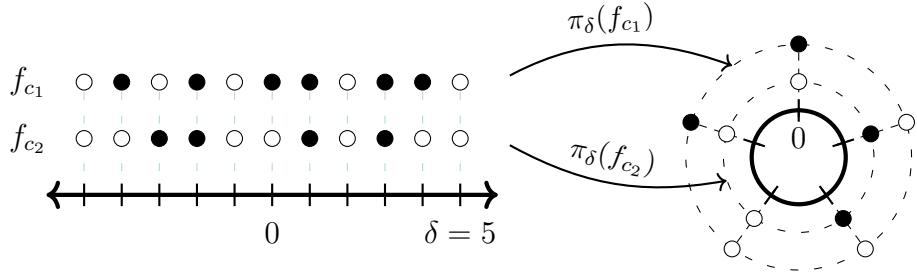
\begin{figure}[!h]
        \centering
        \begin{tikzpicture}[decoration={
  markings,
  mark=at position 0.2 with {\arrow{<}}}
]

	\draw[help lines, loosely dashed, ystep=0, xstep=0.5] (-5,0) grid(0,1.5);
	\draw[<->,ultra thick] (-5.5,0)--(0.5,0);
	\draw[-, thick] (-5,-0.15)--(-5,0.15);
        \draw[-, thick] (-4.5,-0.15)--(-4.5,0.15);
	\draw[-, thick] (-4,-0.15)--(-4,0.15);
        \draw[-, thick] (-3.5,-0.15)--(-3.5,0.15);
	\draw[-, thick] (-3,-0.15)--(-3,0.15);
        \draw[-, thick] (-2.5,-0.15)--(-2.5,0.15);
	\draw[-, thick] (-2,-0.15)--(-2,0.15);
        \draw[-, thick] (-1.5,-0.15)--(-1.5,0.15);
	\draw[-, thick] (-1,-0.15)--(-1,0.15);
        \draw[-, thick] (-0.5,-0.15)--(-0.5,0.15);
	\draw[-, thick] (0,-0.15)--(0,0.15);
	\node[draw=none] at (-5.75,0.75) {$f_{c_2}$};
	\node[draw=none] at (-5.75,1.5) {$f_{c_1}$};

	\filldraw[draw=black,fill=white] (-5,1.5) circle (3pt);
        \filldraw[draw=black] (-4.5,1.5) circle (3pt);
        \filldraw[draw=black,fill=white] (-4,1.5) circle (3pt);
        \filldraw[draw=black] (-3.5,1.5) circle (3pt);
        \filldraw[draw=black,fill=white] (-3,1.5) circle (3pt);
        \filldraw[draw=black] (-2.5,1.5) circle (3pt);
        \filldraw[draw=black] (-2,1.5) circle (3pt);
        \filldraw[draw=black,fill=white] (-1.5,1.5) circle (3pt);
        \filldraw[draw=black] (-1,1.5) circle (3pt);
        \filldraw[draw=black] (-0.5,1.5) circle (3pt);

        \filldraw[draw=black,fill=white] (-5,0.75) circle (3pt);
        \filldraw[draw=black,fill=white] (-4.5,0.75) circle (3pt);
        \filldraw[draw=black] (-4,0.75) circle (3pt);
        \filldraw[draw=black] (-3.5,0.75) circle (3pt);
        \filldraw[draw=black,fill=white] (-3,0.75) circle (3pt);
        \filldraw[draw=black,fill=white] (-2.5,0.75) circle (3pt);
        \filldraw[draw=black] (-2,0.75) circle (3pt);
        \filldraw[draw=black,fill=white] (-1.5,0.75) circle (3pt);
        \filldraw[draw=black] (-1,0.75) circle (3pt);
        \filldraw[draw=black,fill=white] (-0.5,0.75) circle (3pt);

		\node[circle, draw=black, minimum size =1.25cm, ultra thick] (c) at (4.5,0.5){};
        \node[circle, draw=black, minimum size=2cm, loosely dashed] (c1) at (4.5,0.5){};
        \node[circle, draw=black, minimum size=3cm, loosely dashed] (c2) at (4.5,0.5){};
		\node[draw=none] at ($(c.north)-(0,0.4)$) {$0$};
		
		\foreach \X in {90,162,...,405}
            {\draw[thick] (c.\X) -- ++ (\X:0.12);
            \draw[thick] (c.\X) -- ++ (180+\X:0.12);}

            \foreach \X in {90,162,...,405}
            {\draw[dashed] (c.\X) -- ++ (\X:1);
            \draw[thick] (c.\X) -- ++ (180+\X:0.18);}

            \draw[->, thick] (0.5,1.5) edge [out=30, in=160] node [midway, above, sloped] (p) {\small$\pi_\delta(f_{c_1})$}  ($(c2.126)$);
            \filldraw[draw=white,fill=white] (0.5,1.5) circle (5pt);
            \filldraw[draw=black,fill=white] (0,1.5) circle (3pt);

            \draw[->, thick] (0.5,0.75) edge [out=-30, in=190] node [midway, above, sloped] (p2) {\small$\pi_\delta(f_{c_2})$}  ($(c1.198)$);
            \filldraw[draw=white,fill=white] (0.5,0.75) circle (5pt);
            \filldraw[draw=black,fill=white] (0,0.75) circle (3pt);
        \filldraw[draw=black, fill=white] (c2.234) circle (3pt);
        \filldraw[draw=black,fill=white] (c2.306) circle (3pt);
        \filldraw[draw=black,fill=white] (c2.378) circle (3pt);
		
		\filldraw[black] (c2.north) circle (3pt);
		
		\filldraw[black] (c2.162) circle (3pt);

 \filldraw[draw=black,fill=white] (c1.north) circle (3pt);
		\filldraw[draw=black,fill=white] (c1.162) circle (3pt);
		\filldraw[draw=black,fill=white] (c1.234) circle (3pt);
		\filldraw[black] (c1.306) circle (3pt);
		\filldraw[black] (c1.378) circle (3pt);

 \node[draw=none] at (-2.5,-0.5) {$0$};
 \node[draw=none] at (0,-0.5) {$\delta=5$};

	\end{tikzpicture}
        \caption{An example of an instance where $\pi_\delta(f_{c_1})=\pi_\delta(f_{c_2})^{\Delta}$. Here, $\delta=5$ and $\Delta=-3$.}
        \end{figure}
        In other words, the equation has a solution if and only if the images of $f_{c_1}$ and $f_{c_2}$ are cyclic permutations of each other in the quotient space defined by the $\delta$-component of the functions. Let $\Sigma=\{0,1\}$. Denote by $s_1, s_2\in \Sigma^{\delta}$ the bitstring representations of the words $w_1$ and $w_2$ representing $\pi_\delta(f_{c_1})$ and $\pi_\delta(f_{c_2})$ respectively. We know that $s_1$ and $s_2$ are cyclic permutations of each other if and only if $s_1$ appears somewhere in $s_2^2$. This can be checked in linear time using the 
        Knuth--Morris--Pratt pattern-matching algorithm, see \cite{KMP,Matiyasevich:1973}.
\end{proof}

To explain the idea behind the proof of the next lemma
let us consider an element $(\delta,f)\in L_2$ with $\delta>0$.
Let $i\in \supp(f)$. Then
$$
(0,-x^i)^{-1}(\delta,f)(0,-x^i)=
(\delta,-(1-x^{-\delta})x^i+f)=
(\delta,f-x^i+x^{i-\delta}).
$$
Hence, an (elementary) conjugation by $(0,-x^i) = t^iat^{-i}\in Y$ 
switches off the lamp at position $i$
and changes the state of the lamp at position $i-\delta$.
One can think that such an operation ``pulls'' the state of the lamp 
at position $i$
$\delta$ positions left
onto the state of the lamp at position $i-\delta$.
In a similar fashion, conjugating $(\delta,f)$ 
by $(0,x^{i+\delta})$ pulls the state from the position $i$
to the position $i+\delta$.
Therefore, there exists a sequence of elementary conjugations that
\begin{itemize}
\item[(L)]
pulls the lamps lit by $f$ right of the position $\delta-1$ 
all the way \textbf{left} to positions $\{0,\dots,\delta-1\}$ (starting from the rightmost
lit lamp and proceeding left);
\item[(R)]
pulls the lamps lit by $f$ left of the position $0$ 
all the way \textbf{right} to positions $\{0,\dots,\delta-1\}$
(starting from the leftmost lit lamp and proceeding right).
\end{itemize}
Notice that a single operation of pulling a lamp $\delta$
units left/right can be done in constant time if
with the lamp at position $i$ we keep the pointers
to the lamps at positions $i\pm\delta$, i.e., 
if we prepare a linked structure similar to one shown in Figure \ref{fig:linked}.

        \begin{figure}[!h]
        \centering
        \begin{tikzpicture}
        \draw[-, thick] (-2,0)--(3,0);
        \draw[-, thick] (-2,1)--(3,1);

        \draw[-, thick] (0,0)--(0,1);
        \draw[-, thick] (1,0)--(1,1);
        \draw[-, thick] (2,0)--(2,1);
        \draw[-, thick] (-1,0)--(-1,1);
        \draw[-, thick] (-2,0)--(-2,1);
        \draw[-, thick] (3,0)--(3,1);

        \node[draw=none] at (4,0.5) {$\cdots$};
        \node[draw=none] at (-3,0.5) {$\cdots$};

        \draw[-, thick] (5.5,0)--(7,0);
        \draw[-, thick] (5.5,1)--(7,1);
        \draw[-, thick] (-4.5,0)--(-6,0);
        \draw[-, thick] (-4.5,1)--(-6,1);

        \draw[-, thick] (6,0)--(6,1);
        \draw[-, thick] (7,0)--(7,1);
        \draw[-, thick] (-5,0)--(-5,1);
        \draw[-, thick] (-6,0)--(-6,1);

        \node[draw=none] at (0.5,0.5) {\footnotesize{$f(1)$}};
        \node[draw=none] at (1.5,0.5) {\footnotesize{$f(2)$}};
        \node[draw=none] at (2.5,0.5) {\footnotesize{$f(3)$}};
        \node[draw=none] at (-0.5,0.5) {\footnotesize{$f(0)$}};
        \node[draw=none] at (-1.5,0.5) {\footnotesize{$f(-1)$}};
        \node[draw=none] at (-5.5,0.5) {\footnotesize{$f(m)$}};
        \node[draw=none] at (6.5,0.5) {\footnotesize{$f(M)$}};

        \draw[stealth-stealth, thick] (-1.5,0) to[out=-40,in=-140] (1.5,0);
        \draw[stealth-stealth, thick] (-0.5,0) to[out=-40,in=-140] (2.5,0);
        \draw[stealth-stealth, thick] (0.5,0) to[out=-40,in=-140] (3.5,0);
        \draw[stealth-stealth, thick] (1.5,0) to[out=-40,in=-140] (4.5,0);
        \draw[stealth-stealth, thick] (2.5,0) to[out=-40,in=-140] (5.5,0);
        \draw[stealth-stealth, thick] (-2.5,0) to[out=-40,in=-140] (0.5,0);
        \draw[stealth-stealth, thick] (-3.5,0) to[out=-40,in=-140] (-0.5,0);
        \draw[stealth-stealth, thick] (-4.5,0) to[out=-40,in=-140] (-1.5,0);
        \draw[stealth-stealth, thick] (3.5,0) to[out=-40,in=-140] (6.5,0);
        \draw[stealth-stealth, thick] (-5.5,0) to[out=-40,in=-140] (-2.5,0);

        \fill [white] (3.025,-0.75) rectangle (5.5,0.025);
        \fill [white] (-4.5,-0.75) rectangle (-2.025,0.025);
	
	\end{tikzpicture}
        \caption{Double-$\delta$-linked representation for $f$ with $\delta=3$.}\label{fig:linked}
        \end{figure}
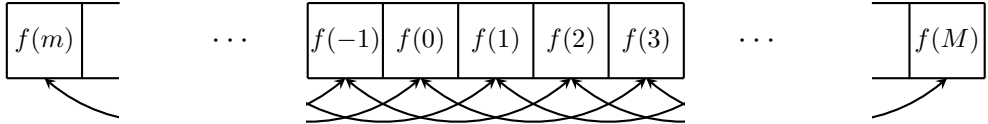


Such sequence transforms $(\delta,f)$ into $(\delta,f')$ with
$\supp(f') \subseteq\{0,\dots,\delta-1\}$
using a conjugator $c=rl$ that consists of two parts, $l$ and $r$, where $l$ pulls lamps to the left and $r$ pulls lamps to the right.
The word $l$ is a product of elementary conjugators 
$$
l=(t^iat^{-i})\cdot (t^jat^{-j})\cdot(t^kat^{-k})\dots=
t^iat^{j-i}at^{k-j}a\cdots
$$
where $i=M(f)>j>k>\dots>\delta-1$; its length is bounded by $3|M(f)|$.
The word $r$ is constructed in a similar fashion and 
satisfies a similar bound $|r|\le 3|m(f)|$.
It is straightforward to construct both words; one can do it in linear time.
Furthermore, since each elementary conjugation
shifts a lamp $\delta$ units to the left or to the right,
it follows that $\pi_\delta(f)=\pi_\delta(f')$ and 
the function $f'$ restricted to the set $\{0,\dots,\delta-1\}$ 
gives exactly $\pi_\delta(f)$ and, so, slightly abusing notation
we get
$$
c^{-1} (\delta,f) c = (\delta,\pi_\delta(f)),
$$
viewing $\pi_\delta(f)$ as a function from $\MZ$ to $\MZ_2$.

\begin{lemma}\label{le:linear_conj}
Let $w=w(a,t)$ be a word in the alphabet of $L_2$ 
defining an element $g=(\delta,f)$ with $\delta>0$.
Then there exists a conjugator $c=c(a,t)$ of length bounded by  $3|w|$ 
such that $c^{-1}gc=(\delta,f')$ with 
$\supp(f')\subseteq\{0,\dots,\delta-1\}$.
Furthermore, $c$ can be computed in linear time.
\end{lemma}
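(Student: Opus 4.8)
The plan is to realize the required conjugator as an explicit product of \emph{elementary conjugators} of the form $t^iat^{-i}=(0,x^i)$ and $t^{i+\delta}at^{-(i+\delta)}=(0,x^{i+\delta})$, exactly along the lines of the discussion preceding the statement. The two facts I would rely on are the identities computed there: conjugating $g=(\delta,f)$ by $(0,x^i)$ toggles the lamps at positions $i$ and $i-\delta$ (a ``left pull''), while conjugating by $(0,x^{i+\delta})$ toggles the lamps at $i$ and $i+\delta$ (a ``right pull''). The crucial invariant is that each elementary conjugation toggles two positions lying in the \emph{same} residue class modulo $\delta$, hence leaves $\pi_\delta(f)$ unchanged. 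Consequently, whatever $f'$ I end with will satisfy $\pi_\delta(f')=\pi_\delta(f)$; and since a function supported on $\{0,\dots,\delta-1\}$ is uniquely recovered from its image under $\pi_\delta$, once I force $\supp(f')\subseteq\{0,\dots,\delta-1\}$ the resulting $f'$ is automatically $\pi_\delta(f)$ read off on that window.

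Next I would run the two sweeps (L) and (R). For (L), scan the positions $p=M(f),M(f)-1,\dots,\delta$ in decreasing order, and whenever the lamp currently at $p$ is lit, apply the left pull at $p$: this switches $p$ off and toggles $p-\delta$. Because the scan never returns to a position $\ge p$ and every left pull only toggles positions $\ge 0$, after the sweep all lamps at positions $\ge\delta$ are off and no lamp at a negative position has been disturbed. Symmetrically, (R) scans $p=m(f),\dots,-1$ in increasing order applying right pulls; each right pull toggles only positions $\le\delta-1$, so it clears every negative position without reviving any lamp at a position $\ge\delta$. After both sweeps $\supp(f')\subseteq\{0,\dots,\delta-1\}$, as required.

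The conjugator recorded by these sweeps is $c=rl$, where $l$ (resp.\ $r$) is the product of the elementary conjugators used in (L) (resp.\ (R)), taken at the pulled positions $p_1>p_2>\dots>p_n$ in $[\delta,M(f)]$. Collapsing adjacent $t$-powers telescopes $l$ into $t^{p_1}at^{p_2-p_1}a\cdots at^{-p_n}$, whose total $t$-exponent length is $2M(f)$ and which uses $n\le M(f)$ letters $a$, giving $|l|\le 3M(f)$; symmetrically $|r|\le 3|m(f)|$. I would finish the length bound by cases: if only one sweep fires, the bound $3|w|$ follows from $M(f)\le|w|$ (resp.\ $|m(f)|\le|w|$), which hold because the lamplighter must travel to every lit position; if both sweeps fire, then $M(f)>0>m(f)$ and $|c|\le 3M(f)+3|m(f)|=3\,\diam(f)\le 3|w|$ by Lemma \ref{le:inv2}.

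Finally, for the linear-time claim I would first build $f$ from $w$ and equip it with the double-$\delta$-linked structure of Figure \ref{fig:linked}, so that each elementary pull (reading and toggling $p$ and $p\pm\delta$) costs $O(1)$; since the two sweeps visit $O(\diam(f))=O(|w|)$ positions and emit the conjugator letters as they go, the whole computation runs in time $O(|w|)$. I expect the only delicate point to be the length accounting: one must check both that the two sweeps cannot interfere (left pulls act only on positions $\ge 0$ and right pulls only on positions $\le\delta-1$) and that the single-sweep cases really stay within $3|w|$, which is precisely where the per-side bounds $M(f)\le|w|$ and $|m(f)|\le|w|$ — rather than the weaker diameter bound of Lemma \ref{le:inv2} — are needed.
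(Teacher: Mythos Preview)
Your proposal is correct and follows essentially the same approach as the paper's argument (given in the discussion immediately preceding the lemma): the same elementary left/right pulls, the same two sweeps (L) and (R), the same telescoped form of $l$ and $r$ with bounds $|l|\le 3M(f)$ and $|r|\le 3|m(f)|$, and the same double-$\delta$-linked structure for the linear-time claim. Your case analysis for combining the two per-side bounds into $|c|\le 3|w|$ is in fact slightly more careful than what the paper writes out explicitly.
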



Recall that the \textbf{conjugacy search} problem for $L_2$ requires finding $x\in L_2$ satisfying $x^{-1} c_1 x=c_2$ for given conjugate elements $c_1,c_2\in L_2$.

\begin{theorem}
There is an algorithm that, for given conjugates $c_1,c_2\in L_2$, produces a conjugator $x=x(a,t)$ in linear time $O(|c_1|+|c_2|)$.
\end{theorem}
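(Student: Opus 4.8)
The plan is to reduce the search to the two normalized situations already analyzed in Proposition~\ref{pr:conjugacy-linear}, and then to assemble the conjugator out of three pieces, each of linear size and each computable in linear time. Since $\sigma_t$ is a homomorphism onto the abelian group $\MZ$, conjugate elements share the same $t$-exponent, so I may write $c_1=(\delta,f_1)$ and $c_2=(\delta,f_2)$ with a common $\delta=\sigma_t(c_1)=\sigma_t(c_2)$, where $|\delta|\le|c_1|$ by Lemma~\ref{le:inv2}. I split on whether $\delta=0$, and in both cases first dispose of the degenerate subcase where some $\supp(f_i)=\varnothing$ (then either $x=1$ works or the elements are not conjugate).

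If $\delta=0$, then $c_1,c_2\in Y$, and by Lemma~\ref{le:Y-conjugation} conjugation by any $z=(\delta_z,f)$ acts as $f_1\mapsto f_1^{\delta_z}$, so the $f$-part of $z$ is irrelevant and I only need a shift carrying $f_1$ onto $f_2$. As in Case-I of Proposition~\ref{pr:conjugacy-linear}, this shift is $\delta_z=m(f_1)-m(f_2)$, which is read off in linear time, and the conjugator is $x=t^{\delta_z}$.

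If $\delta\ne 0$, I may assume $\delta>0$ (the case $\delta<0$ is symmetric, e.g.\ by passing to $c_i^{-1}$). I apply Lemma~\ref{le:linear_conj} to each of $c_1,c_2$ to compute, in linear time, conjugators $u,v$ of lengths at most $3|c_1|$ and $3|c_2|$ with
$$u^{-1}c_1u=(\delta,\pi_\delta(f_1)),\qquad v^{-1}c_2v=(\delta,\pi_\delta(f_2)),$$
so that both supports now lie in $\{0,\dots,\delta-1\}$. Because $c_1$ and $c_2$ are conjugate, so are these two normalized elements, and Case-II of Proposition~\ref{pr:conjugacy-linear} then forces $\pi_\delta(f_1)$ and $\pi_\delta(f_2)$ to be cyclic shifts of one another in $\MZ_2^{\MZ_\delta}$. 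A direct computation with the conjugation formula gives $t^{-s}(\delta,g)t^s=(\delta,g^{s})$, so conjugation by $t^s$ rotates $\pi_\delta$ by $s$; hence I search for the offset $s$ with $0\le s<\delta\le|c_1|$ at which the bitstring of $\pi_\delta(f_1)$ occurs inside the doubled bitstring of $\pi_\delta(f_2)$, which the Knuth--Morris--Pratt algorithm locates in linear time.

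Finally I set $x=u\,t^{s}\,v^{-1}$ and verify by telescoping that
$$x^{-1}c_1x=v\,t^{-s}(\delta,\pi_\delta(f_1))\,t^s\,v^{-1}=v\,(\delta,\pi_\delta(f_2))\,v^{-1}=c_2.$$
Its length is bounded by $3|c_1|+|c_1|+3|c_2|=O(|c_1|+|c_2|)$, and every piece was produced in linear time, which gives the claimed bound. I expect the only delicate points to be the bookkeeping of the signs and directions of the shifts---in particular reconciling the action $f^{\delta}(x)=f(x+\delta)$ with the polynomial form $f^{\delta}=f\,x^{-\delta}$---and the clean handling of the degenerate subcases; the algorithmic substance is carried entirely by Lemma~\ref{le:linear_conj} and the Knuth--Morris--Pratt matching, both already available.
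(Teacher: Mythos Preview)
Your overall strategy matches the paper's: split on $\delta$, use Lemma~\ref{le:linear_conj} to push supports into $\{0,\dots,\delta-1\}$, and use KMP to find the cyclic offset. However, your assembly of the conjugator in the case $\delta\ne 0$ is wrong, not merely a sign issue.

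After you normalize to $u^{-1}c_1u=(\delta,g_1)$ and $v^{-1}c_2v=(\delta,g_2)$ with $\supp(g_i)\subseteq\{0,\dots,\delta-1\}$, conjugation by $t^s$ in $L_2$ gives
\[
t^{-s}(\delta,g_1)t^s=(\delta,g_1^{\,s}),
\]
where $g_1^{\,s}$ is the \emph{linear} shift in $\MZ_2^{\MZ}$: its support now sits in $\{-s,\dots,\delta-1-s\}$, not in $\{0,\dots,\delta-1\}$. So $(\delta,g_1^{\,s})\ne(\delta,g_2)$ in $L_2$ whenever $0<s<\delta$, even though $\pi_\delta(g_1^{\,s})=g_2$ in $\MZ_2^{\MZ_\delta}$. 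Your telescoping verification therefore fails at the middle equality, and $x=ut^sv^{-1}$ is in general not a conjugator. A concrete counterexample: $\delta=3$, $g_1=\mathbf{1}_0$, $g_2=\mathbf{1}_1$, $s=2$; then $g_1^{\,2}=\mathbf{1}_{-2}\ne g_2$.

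The paper avoids this by swapping the order of your two steps: it first conjugates $c_1$ by $t^\Delta$ to $(\delta,f_1^{\Delta})$ and only \emph{then} applies Lemma~\ref{le:linear_conj}; since $\pi_\delta(f_1^{\Delta})=\pi_\delta(f_2)$, both normalizations land on the same element and the pieces chain correctly to give $x=t^{\Delta}x_1x_2^{-1}$. Your argument can be repaired the same way, or alternatively by applying Lemma~\ref{le:linear_conj} once more to $(\delta,g_1^{\,s})$ to re-center its support; either fix preserves the linear bound.
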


\begin{proof}
Consider two conjugate elements $c_1,c_2\in L_2$. 
Compute $(\delta_{c_1},f_{c_1})$ and $(\delta_{c_2},f_{c_2})$.

\textsc{(Case-I)}
If $\delta_{c_1}=\delta_{c_2}=0$, then, as in the proof of
Proposition \ref{pr:conjugacy-linear}, use the KMP algorithm
to find $\Delta\in\MZ$ satisfying $f_{c_1}^\Delta=f_{c_2}$ in linear time and output $x=t^{\Delta}$, as illustrated in Figure \ref{fig:caseI}.

\textsc{(Case-II)}
If $\delta_{c_1}=\delta_{c_2} \ne 0$, then
without loss of generality we may assume that 
$\delta_{c_1}=\delta_{c_2} > 0$ and, as in the proof of
Proposition \ref{pr:conjugacy-linear}, use the KMP algorithm
to find $\Delta\in\MZ$ where $0\le\Delta<\delta$ satisfying 
$$
\pi_{\delta}(f_{c_1}^\Delta) = \pi_{\delta}(f_{c_1})^\Delta=\pi_{\delta}(f_{c_2}).
$$
Then apply Lemma \ref{le:linear_conj} twice:
\begin{itemize}
\item 
find $x_1=x_1(a,t)$ such that
$x_1^{-1} (\delta,f_{c_1}^\Delta) x_1= (\delta,\pi_{\delta}(f_{c_1}^\Delta))$;
\item 
find $x_2=x_2(a,t)$ such that
$x_2^{-1} (\delta,f_{c_2}) x_2 = (\delta,\pi_{\delta}(f_{c_2}))$.
\end{itemize}
An immediate consequence is that 
$x_1^{-1} t^{-\Delta} c_1 t^{\Delta} x_1 = x_2^{-1} c_2 x_2$
and the algorithm can output $x=t^{\Delta} x_1 x_2^{-1}$.
\end{proof}

\section{Orientable equations}
\label{se:orientable}

Here we study quadratic equations defined by \eqref{eq:orientable} 
over $\lamp$.
Recall that the \emph{derived subgroup} $[G,G]$ of a group $G$
is defined as
$$
[G,G]=\gp{[x,y] \mid x,y\in G}.
$$
The \emph{commutator width} of $G$ is the least $k\in\MN \cup\{\infty\}$ such that every $g\in [G,G]$ can be expressed
as a product of at most $k$ commutators.

\begin{lemma}\label{le:commutator-width}
The following hold:
\begin{itemize}
\item[(a)]
$[\lamp,\lamp] = \ker(\sigma_a) \cap \ker(\sigma_t)$.
\item[(b)]
The commutator width of $\lamp$ is one.
\end{itemize}
\end{lemma}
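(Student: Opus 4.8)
The plan is to work with the Laurent polynomial realization of $L_2$, where $g = (\delta, f)$ with $\delta \in \MZ$ and $f \in \MZ_2[x^\pm]$, and to characterize the derived subgroup concretely before deducing the commutator width bound.

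\textbf{Part (a).} First I would establish the inclusion $[\lamp,\lamp] \subseteq \ker(\sigma_a) \cap \ker(\sigma_t)$. Since $\sigma_a\colon L_2 \to \MZ_2$ and $\sigma_t\colon L_2 \to \MZ$ are homomorphisms onto abelian groups, every commutator lies in both kernels, so the inclusion is immediate. For the reverse inclusion, take $g = (0, f)$ with $\sigma_a(g) = \sum_i f(i) = 0$ in $\MZ_2$; equivalently, $f(1) = 0$ in $\MZ_2[x^\pm]$ after evaluation at $x=1$, i.e.\ $f$ is divisible by $(1-x)$ in the PID $\MZ_2[x^\pm]$. The key computational input is the conjugation formula already recorded in the excerpt: for $z = (\delta, h)$ and $(0, f_1) \in Y$, one has $z^{-1}(0,f_1)z = (0, f_1^\delta) = (0, x^{-\delta} f_1)$, and more generally the commutator $[z_1, z_2]$ of two elements of $L_2$ produces an element $(0, (1-x^{b})\cdot u)$ for suitable $b$ and $u$. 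The strategy is to show that the ideal generated by all such commutator values is exactly $(1-x)\MZ_2[x^\pm] = \ker(\sigma_a)\cap\ker(\sigma_t)$ restricted to $Y$; since a single commutator $[a, t] = (0, 1 - x)$ (computed directly from the formulas) already lands on the generator $1-x$ of this principal ideal, the reverse inclusion follows.

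\textbf{Part (b).} Given (a), I would prove commutator width one by exhibiting, for an arbitrary $g = (0, f) \in [\lamp,\lamp]$, a single commutator $[z_1, z_2]$ equal to $g$. Writing $f = (1-x)\cdot u$ for some $u \in \MZ_2[x^\pm]$ (using that $f$ is divisible by $1-x$), the plan is to find $z_1, z_2$ so that the commutator formula yields precisely $(0, (1-x)\cdot u)$. Taking $z_1 = (1, 0) = t$ fixes the shift parameter, so conjugation by $z_1$ multiplies the lamp-polynomial by $x^{-1}$, and a direct expansion of $[t, (0,p)] = t^{-1}(0,p)^{-1} t (0,p)$ gives $(0, (1-x)\cdot p)$ after simplification via the recorded formulas. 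Choosing $p = u$ then produces $g$ as a single commutator.

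The main obstacle I anticipate is the bookkeeping in the reverse inclusion of part (a) and in the explicit commutator construction of part (b): one must verify that the $\MZ$-shift components cancel (so the result lands in $Y$) and that, in characteristic $2$, the factor $(1-x) = (1+x)$ behaves correctly, in particular that every element of $\ker(\sigma_a)\cap\ker(\sigma_t)$ indeed has lamp-polynomial divisible by $1-x$ — this is exactly the statement that evaluation at $x=1$ computes $\sigma_a$, which must be checked against the definitions. Once the single-commutator formula $[t,(0,p)] = (0,(1-x)p)$ is verified, both parts fall out quickly, so I would prioritize getting that formula exactly right.
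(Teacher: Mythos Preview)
Your proposal is correct and, for part (b), takes essentially the same route as the paper: both arguments exhibit an arbitrary $(0,f)\in\ker(\sigma_a)\cap\ker(\sigma_t)$ as a single commutator of the form $[t,(0,g)]$ (up to inversion/ordering), where $g$ satisfies $(1+x)g=f$. The paper constructs $g$ explicitly via partial sums $g(j)=\sum_{i\le j}f(i)$, while you obtain it abstractly from divisibility by $1-x$ in the PID $\MZ_2[x^\pm]$; these are two descriptions of the same element. Your part (a) adds a separate ideal-theoretic argument (normality of $[L_2,L_2]$ makes its image in $\MZ_2[x^\pm]$ a submodule, hence an ideal, containing the generator $1-x$), which is a pleasant alternative but becomes redundant once (b) is established. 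Two small computational slips to fix: with the paper's conventions one gets $[a,t]=(0,1+x^{-1})$ and $[t,(0,p)]=(0,(1+x^{-1})p)$ rather than $(0,1-x)$ and $(0,(1-x)p)$; the discrepancy is a unit factor $x^{-1}$, so it does not affect either the ideal argument or the choice of $p$ (take $p=xu$ instead of $p=u$).
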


\begin{proof}
It is easy to check that every
product of commutators belongs to $\ker(\sigma_a) \cap \ker(\sigma_t)$
and, hence, $[\lamp,\lamp] \subseteq \ker(\sigma_a) \cap \ker(\sigma_t)$.
Conversely, if $(\delta,f) \in \ker(\sigma_a) \cap \ker(\sigma_t)$, then
$\delta=0$ and 
$\sum_{i=-\infty}^\infty f(i)=0$.
Define $g\colon\MZ\to\MZ_2$ by
$$
g(j)=\sum_{i\le j} f(i)
\ \mbox{ for } \ j\in\MZ.
$$
Clearly, $g$ satisfies $f(j)=g(j)-g(j-1)$ for every $j$
and has finite support
because $\sum_{i=-\infty}^\infty f(i)=0$.
Hence,
$$
(0,f)=
(0,g)(1,0)(0,g)^{-1}(1,0)^{-1},
$$
i.e., $(0,f)$ is a commutator.
Thus, $[\lamp,\lamp] \supseteq \ker(\sigma_a) \cap \ker(\sigma_t)$.

Finally, notice that the above argument shows that every element 
in $[\lamp,\lamp]$ is a single commutator, which establishes (b).
\end{proof}

\begin{proposition} \label{pr:oqe_p}
The Diophantine problem for orientable quadratic equations over $\lamp$ 
is decidable in linear time.
\end{proposition}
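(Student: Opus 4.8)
The plan is to reduce the orientable equation \eqref{eq:orientable} to a solvability question that the earlier machinery (Proposition \ref{pr:transform-components} and Proposition \ref{pr:spherical-reduction}) can settle, exploiting the fact that the genus-$g$ part $\prod_{i=1}^g[x_i,y_i]$ can produce \emph{any} element of the derived subgroup $[\lamp,\lamp]=\ker(\sigma_a)\cap\ker(\sigma_t)$. By Lemma \ref{le:commutator-width}(b), even a single commutator $[x_1,y_1]$ already ranges over all of $[\lamp,\lamp]$, so for $g\ge 1$ the product $\prod_{i=1}^g[x_i,y_i]$ realizes exactly the set $[\lamp,\lamp]$. Thus \eqref{eq:orientable} has a solution if and only if there is a choice of $z_1,\dots,z_k$ such that the spherical tail $\prod_{j=1}^k z_j^{-1}c_jz_j$ equals an element $h^{-1}$ for some $h\in[\lamp,\lamp]$; equivalently, $\prod_{j=1}^k z_j^{-1}c_jz_j$ must itself lie in $[\lamp,\lamp]$, since once it does we can absorb it into the commutator part.

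First I would make this precise: write each $z_j=(\delta_{z_j},f_{z_j})$ and each $c_j=(\delta_{c_j},f_{c_j})$, and observe that the conjugates $z_j^{-1}c_jz_j$ all have the same $t$-exponent $\delta_{c_j}$ and the same $\sigma_a$-value $\sigma_a(c_j)$, since $\sigma_a$ and $\sigma_t$ are homomorphisms that kill conjugation. Hence the spherical tail lies in $\ker(\sigma_a)\cap\ker(\sigma_t)=[\lamp,\lamp]$ \emph{for any} choice of conjugators if and only if the two coefficient conditions
\begin{align*}
\sum_{j=1}^k \delta_{c_j}&=0, &\sum_{j=1}^k \sigma_a(c_j)&=0
\end{align*}
hold. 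These are exactly the $\sigma_t$ and $\sigma_a$ obstructions, and they are computable in linear time by the remarks preceding the subgroup $Y$. If either fails, no choice of $z_j$ can push the tail into $[\lamp,\lamp]$, so the equation has no solution; if both hold, the tail lies in $[\lamp,\lamp]$ regardless of the $z_j$, and then we may take (say) $z_j=1$ and solve for $x_1,y_1$ by Lemma \ref{le:commutator-width}(b), which guarantees that any element of the derived subgroup—including the inverse of the tail—is a single commutator. So the decision reduces to checking two linear-time identities.

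The remaining point is that the algorithm is genuinely linear and that I have not overlooked a case. The forward direction (solution $\Rightarrow$ both identities) is immediate from applying $\sigma_a$ and $\sigma_t$ to \eqref{eq:orientable} and using that both annihilate every commutator and every conjugate. The reverse direction (both identities $\Rightarrow$ solution) is the substantive half: I would set all $z_j=1$, let $h=\prod_{j=1}^k c_j\in[\lamp,\lamp]$, and then invoke Lemma \ref{le:commutator-width}(b) to write $h^{-1}=[x_1,y_1]$ explicitly (the construction in the proof of that lemma, via the partial-sum function $g$, is constructive and linear in $|h|$), while setting $x_i=y_i=1$ for $i\ge 2$; this yields $\prod_{i=1}^g[x_i,y_i]\prod_{j=1}^k z_j^{-1}c_jz_j=[x_1,y_1]\,h=h^{-1}h=1$. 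Both $\sum\delta_{c_j}$ and $\sum\sigma_a(c_j)$ are computable by a single linear scan of the input word $W$, so the whole decision runs in time $O(|W|)$.

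I expect the main obstacle to be \emph{justifying that no genuine constraint on the $z_j$ survives}: one must be careful to verify that membership of the spherical tail in $[\lamp,\lamp]$ is independent of the conjugators and governed \emph{solely} by the $\sigma_a$ and $\sigma_t$ values of the coefficients, rather than by some subtler condition on the lamp-configurations $f_{c_j}$ (as was the case for the purely spherical equation in Proposition \ref{pr:spherical-reduction}, where condition (Sp2) genuinely constrained the shifts). The resolution is precisely Lemma \ref{le:commutator-width}: because the commutator width is one, the genus part is \emph{not} a rigid coefficient but a free slot that absorbs any derived-subgroup element, which collapses the configuration condition (Sp2) entirely and leaves only the two homomorphic obstructions.
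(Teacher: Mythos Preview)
Your proposal is correct and follows essentially the same approach as the paper: both reduce solvability of \eqref{eq:orientable} to the two homomorphic obstructions $\sigma_t(c_1\cdots c_k)=0$ and $\sigma_a(c_1\cdots c_k)=0$ via Lemma \ref{le:commutator-width}, noting that conjugation is invisible to $\sigma_a$ and $\sigma_t$. The references to Proposition \ref{pr:transform-components} and Proposition \ref{pr:spherical-reduction} in your opening plan are unnecessary---you never actually use them, and the paper's proof does not either---but the argument itself is sound and matches the paper's.
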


\begin{proof}
Indeed, for an orientable equation \eqref{eq:orientable} of genus $g\ge 1$ we have
\begin{align*}
\mbox{\eqref{eq:orientable} has a solution}
&\ \ \stackrel{\ref{le:commutator-width}(b)}{\Leftrightarrow}\ \ 
\prod_{j=1}^k z_j^{-1}c_jz_j \in [\lamp,\lamp]
\mbox{ for some } z_1,\dots,z_k\\
&\ \ \stackrel{\ref{le:commutator-width}(a)}{\Leftrightarrow}\ \ 
\sigma_a\rb{\prod_{j=1}^k z_j^{-1}c_jz_j}=0
\ \wedge\ 
\sigma_t\rb{\prod_{j=1}^k z_j^{-1}c_jz_j}=0 \\
&\ \ \stackrel{\phantom{\ref{le:commutator-width}(a)}}{\Leftrightarrow}\ \ 
\sigma_a(c_1\cdots c_k)=0
\ \wedge\ 
\sigma_t(c_1\cdots c_k)=0,
\end{align*}
where the last equivalence can be seen from the fact that $\sigma_a$ and $\sigma_t$
are homomorphisms into abelian groups in which $z_j^{-1}$ and $z_j$ cancel out.
Finally, the condition
$\sigma_a(c_1\cdots c_k)=0 \ \wedge\ \sigma_t(c_1\cdots c_k)=0$
can be checked in linear time.
\end{proof}

\section{Non-orientable equations}
\label{se:nonorientable}

In this section we analyze non-orientable equations in the lamplighter group $L_2$.
We say that $g\in G$ is a \emph{square} if $g=h^2$ for some $h\in G$.

\begin{lemma}\label{le:square}
$(\delta,f)\in L_2$ is a square if and only if 
$\delta$ is even and $\pi_{\delta/2}(f)=0$.
\end{lemma}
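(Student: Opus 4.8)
The plan is to use the explicit squaring formula $(\delta,f)^2=(2\delta,(x^{-\delta}+1)f)$ recorded just before Lemma \ref{le:Y-conjugation}, and to translate the condition ``$(\delta,f)$ is a square'' into an equation over the Laurent polynomial ring $\MZ_2[x^\pm]$. Working in $L_2=\MZ\ltimes\MZ_2[x^\pm]$, an element $h=(\eta,p)$ satisfies $h^2=(\delta,f)$ precisely when $2\eta=\delta$ and $(x^{-\eta}+1)\,p=f$. Reading off the first coordinate immediately forces $\delta$ to be even, with $\eta=\delta/2$ the only possibility; this is the easy direction of the necessity argument and disposes of the integer component.

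The heart of the matter is the second coordinate: I must show that, with $\eta=\delta/2$, the equation $(x^{-\eta}+1)\,p=f$ is solvable for $p\in\MZ_2[x^\pm]$ if and only if $\pi_{\eta}(f)=0$. Since $\MZ_2$ has characteristic $2$, note $x^{-\eta}+1=x^{-\eta}(1+x^{\eta})$ and $1+x^{\eta}=-(x^{\eta}-1)=x^{\eta}-1$, so up to a unit $x^{-\eta}$ the multiplier is exactly $x^{\eta}-1$. Thus solvability of $(x^{-\eta}+1)p=f$ is equivalent to $f$ lying in the ideal $\gp{x^{\eta}-1}$ of $\MZ_2[x^\pm]$. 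At this point I would invoke Proposition \ref{pr:transform-components} (or directly Corollary \ref{cor:sequencemulti} with $\ovb=(\eta)$), which identifies $\gp{x^{\eta}-1}=\im(\lambda_{(\eta)})$ with $\ker(\pi_{\eta})$. Hence $f\in\gp{x^{\eta}-1}$ if and only if $\pi_{\eta}(f)=0$, which is exactly condition $\pi_{\delta/2}(f)=0$.

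Assembling the two directions: if $(\delta,f)=h^2$ then comparing first coordinates gives $\delta$ even with $\eta=\delta/2$, and comparing second coordinates gives $f\in\gp{x^{\eta}-1}=\ker(\pi_{\eta})$, so $\pi_{\delta/2}(f)=0$. Conversely, if $\delta$ is even and $\pi_{\delta/2}(f)=0$, set $\eta=\delta/2$; then $f\in\ker(\pi_{\eta})=\gp{x^{\eta}-1}$, so $f=(x^{-\eta}+1)p$ for some $p$ (absorbing the unit $x^{-\eta}$ into $p$), and $h=(\eta,p)$ satisfies $h^2=(2\eta,(x^{-\eta}+1)p)=(\delta,f)$.

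I expect the only genuine subtlety to be the characteristic-$2$ bookkeeping that lets me rewrite the squaring multiplier $x^{-\eta}+1$ as a unit times $x^{\eta}-1$ so that Corollary \ref{cor:sequencemulti} applies verbatim; everything else is a direct reading-off of the two semidirect-product coordinates. A minor degenerate point to address is the case $\delta=0$: there $\eta=0$, the convention $\MZ_0=\MZ$ makes $\pi_0$ the identity on $\MZ_2[x^\pm]$, and the condition $\pi_0(f)=0$ reads $f=0$, consistent with the fact that $(0,f)$ is a square exactly when $f=0$ (the multiplier $x^{-0}+1=1+1=0$ in characteristic $2$ forces $f=0$), so the statement remains correct in this edge case.
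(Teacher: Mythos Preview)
Your proof is correct and follows essentially the same route as the paper: both arguments read off the two semidirect-product coordinates from the squaring formula, use characteristic~$2$ to identify the multiplier $x^{-\eta}+1$ with $x^{\eta}-1$ up to a unit, and invoke Corollary~\ref{cor:sequencemulti} to equate membership in $\gp{x^{\eta}-1}$ with the vanishing of $\pi_{\eta}(f)$. The only cosmetic differences are that the paper's ``$\Rightarrow$'' direction verifies $\pi_b(g+g^b)=0$ directly rather than via the corollary, and that you treat the $\delta=0$ edge case more explicitly.
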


\begin{proof}
``$\Leftarrow$''
If $\delta=2b$ is even and $\pi_b(f)=0$, then we have
\begin{align*}
\pi_b(f)=0
&\ \Rightarrow\ 
f\in\ker(\pi_b) = \im(\lambda_b)
&&\mbox{(Corollary \ref{cor:sequencemulti})}\\
&\ \Rightarrow\ 
f = (1-x^b)\cdot g &&\mbox{(for some $g\in\base$)}\\
&\ \Rightarrow\ 
f = (1+x^b)\cdot g = (x^{-b}+1)\cdot (x^b\cdot g) 
&&\mbox{(characteristic $2$).}
\end{align*}
Hence, $(2b,f)=(b,x^b\cdot g)^2$ is a square.

``$\Rightarrow$''
Suppose that $(\delta,f)=(b,g)^2$ is a square. 
Then $(\delta,f) = (2b,(x^{-b}+1)g)) = (2b,g^b+g))$.
Then $\delta=2b$ and $f=g+g^b$.
Clearly, $\pi_b(g^b)=\pi_b(g)$ and, hence, 
$\pi_b(g^b+g)=0$.
Hence, the result.
\end{proof}

Let $V$ be the verbal subgroup of $L_2$ generated by all squares, i.e.,
$V=\gpr{g^2}{g\in L_2}$. 
It is easy to see that $V$ is a normal subgroup of $L_2$ containing 
the derived subgroup $[L_2,L_2]$, because every commutator can be expressed
as a product of squares as follows:
$$
[g, h]=g^{-1} h^{-1} g h=g^{-2} (g h^{-1})^2 h^{2}
$$
for every $g,h$.
In particular, $L_2/V$ is abelian.

\begin{lemma}\label{le:V-membership}
$L_2/V\simeq \MZ_2\times\MZ_2$.
\end{lemma}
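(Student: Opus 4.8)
The plan is to show that $V$ (the verbal subgroup generated by squares) has index exactly $4$ in $L_2$, and that the quotient is the Klein four-group. The natural approach is to identify $V$ explicitly as a subgroup we can describe via the projection homomorphisms $\sigma_a$ and $\sigma_t$, and then exhibit the quotient directly. Since $L_2/V$ is already known to be abelian (as noted just before the statement, $[L_2,L_2]\subseteq V$), the two homomorphisms $\sigma_a\colon L_2\to\MZ_2$ and $\sigma_t\colon L_2\to\MZ$ provide the obvious candidate coordinates. The key observation is that while $\sigma_t$ surjects onto $\MZ$, every square $(\delta,f)^2$ has $t$-component $2\delta$, so $\sigma_t$ sends all of $V$ into $2\MZ$. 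This suggests that the relevant invariant is not $\sigma_t$ itself but its reduction $\sigma_t \bmod 2$.

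First I would define a homomorphism $\psi\colon L_2\to\MZ_2\times\MZ_2$ by $\psi(\delta,f) = (\sigma_a(\delta,f)\bmod 2,\ \delta\bmod 2) = \bigl(\sum_i f(i),\ \delta\bigr)\in\MZ_2\times\MZ_2$. This is a homomorphism because $\sigma_a$ and $\sigma_t$ are homomorphisms into abelian groups (as established in the preliminaries) and composing with the reduction $\MZ\to\MZ_2$ preserves this. The map is clearly surjective: $a=(0,\one_0)$ maps to $(1,0)$ and $t=(1,0)$ maps to $(0,1)$. So it remains to prove that $\ker\psi = V$, which gives $L_2/V\simeq\MZ_2\times\MZ_2$ by the first isomorphism theorem.

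The inclusion $V\subseteq\ker\psi$ is the routine direction: since $\ker\psi$ is a subgroup, it suffices to check that every square lies in it, and indeed for $(\delta,f)^2=(2\delta,\,(x^{-\delta}+1)f)$ we have $\sigma_t=2\delta\equiv 0$ and $\sigma_a = \sum_i\bigl((x^{-\delta}+1)f\bigr)(i) = 2\sum_i f(i)\equiv 0\pmod 2$, so both coordinates vanish. The reverse inclusion $\ker\psi\subseteq V$ is where the real work lies, and I expect this to be the main obstacle. Suppose $(\delta,f)\in\ker\psi$, so $\delta$ is even and $\sum_i f(i)\equiv 0\pmod 2$. Writing $\delta=2b$, I want to show $(2b,f)\in V$. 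The cleanest route is to use Lemma \ref{le:square}: an element $(2b,f)$ is itself a \emph{single} square precisely when $\pi_b(f)=0$, and $(2b,f)\in\ker\psi$ does not force $\pi_b(f)=0$, so a single square will not generally suffice; instead I would factor $(2b,f)$ as a product of a controlled number of squares. One efficient tactic: since $\sum_i f(i)$ is even, the element $(0,f)$ lies in $[L_2,L_2]$ by Lemma \ref{le:commutator-width}(a), hence is a commutator, hence a product of squares via the identity $[g,h]=g^{-2}(gh^{-1})^2h^2$ recalled above; separately, $(2b,0)=t^{2b}=(t^b)^2$ is a square. Multiplying, $(2b,f)=(2b,0)(0,f^{-2b})$ — more precisely one uses $(2b,0)(0,f')=(2b,f')$ with $f'$ an appropriate shift of $f$ having the same parity sum — exhibits $(2b,f)$ as a product of squares, so $(2b,f)\in V$. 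This completes the identification $\ker\psi=V$ and hence the isomorphism $L_2/V\simeq\MZ_2\times\MZ_2$.
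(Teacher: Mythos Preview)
Your argument is correct, but it takes a more laborious route than the paper. You explicitly prove $\ker\psi\subseteq V$ by factoring an arbitrary $(2b,f)\in\ker\psi$ as $(2b,0)\cdot(0,f)$ (note: the correct factor is $(0,f)$, not $(0,f^{-2b})$, since $(2b,0)(0,f)=(2b,f)$ directly), then invoking $(2b,0)=(t^b)^2$ and $(0,f)\in[L_2,L_2]\subseteq V$ via Lemma~\ref{le:commutator-width}(a). The paper sidesteps this inclusion entirely with a counting argument: since every square is trivial in $L_2/V$, the quotient is an elementary abelian $2$-group, and being generated by the images of $a$ and $t$ it has order at most $4$; the inclusion $V\subseteq\ker\psi$ then shows $1,a,t,at$ are pairwise distinct modulo $V$, forcing $|L_2/V|=4$. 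Your approach is more constructive---it actually writes each element of $V$ as a product of (at most four) squares, which anticipates Proposition~\ref{pr:V-verbal-width}---whereas the paper's argument is slicker and avoids any explicit factorization.
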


\begin{proof}
Clearly, $\sigma_t(g)$ is even and $\sigma_a(g)=0$  
for every square $g=h^2\in L_2$;
this also applies to any product of squares.
Using the contrapositive form of that observation,
the elements $a,t,at$ do not belong to $V$. Similarly we verify that
the elements $1,a,t,at$ are distinct in $L_2/V$.
The quotient group $L_2/V$ is an abelian group in which every nontrivial element
has order $2$. Since $L_2$ is two-generated, its abelian quotient
$L_2/V$ cannot contain more than four elements. Therefore, 
$L_2/V\simeq \MZ_2\times\MZ_2$.
\end{proof}

Lemma \ref{le:V-membership} gives a linear time algorithm
to decide if a word $w=w(a,t)$ defines an element of $V$
($w\in V$ if and only if $w=1$ in $L_2/V$).
By definition, the \emph{verbal width} of $V$ is
the least $k\in\MN\cup\{\infty\}$ such that if $g\in V$,
then $g$ is a product of at most $k$ squares.

\begin{proposition}\label{pr:V-verbal-width}
The verbal width of the subgroup $V$ is $2$.
\end{proposition}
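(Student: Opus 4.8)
The plan is to prove the two inequalities $\mathrm{width}(V)\ge 2$ and $\mathrm{width}(V)\le 2$ separately. For the characterization I would use the fact, extracted from Lemma~\ref{le:V-membership}, that an element $(\delta,f)\in L_2$ lies in $V$ precisely when $\delta$ is even and $\sum_{i}f(i)=0$; in polynomial language the latter says $f(1)=0$, i.e.\ $f\in\gp{x-1}$.

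For the lower bound I would exhibit a single element of $V$ that is not itself a square. The natural witness is $(0,1+x)$: it lies in $V$ because $\delta=0$ is even and $1+x$ has even weight, but by Lemma~\ref{le:square} the element $(0,f)$ is a square if and only if $\pi_0(f)=0$, and here $\pi_0$ is the identity map (recall $\MZ_0=\MZ$, so $\gp{x^0-1}=0$), whence $\pi_0(1+x)=1+x\ne 0$. Thus $(0,1+x)$ is in $V$ but is not a square, so the verbal width is strictly larger than $1$.

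For the upper bound I would take an arbitrary $(\delta,f)\in V$, write $\delta=2b$, and aim to produce $b_1,b_2\in\MZ$ and $g_1,g_2\in\MZ_2[x^\pm]$ with $(\delta,f)=(b_1,g_1)^2(b_2,g_2)^2$. Using the squaring and multiplication formulae of Section~\ref{se:lamplighter}, a direct computation gives
$$
(b_1,g_1)^2(b_2,g_2)^2=\bigl(2(b_1+b_2),\;x^{-2b_2}(x^{-b_1}+1)g_1+(x^{-b_2}+1)g_2\bigr).
$$
Hence I must take $b_1+b_2=b$ and then solve the lamp equation $x^{-2b_2}(x^{-b_1}+1)g_1+(x^{-b_2}+1)g_2=f$. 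The key observation is that, as $g_1,g_2$ range over $\MZ_2[x^\pm]$, the left-hand side ranges exactly over the ideal $\gp{x^{b_1}-1,\,x^{b_2}-1}$ (the unit $x^{-2b_2}$ is irrelevant), which by Lemma~\ref{le:poly_gcd} and Corollary~\ref{cor:sequencemulti} equals $\gp{x^{\gcd(b_1,b_2)}-1}$.

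The decisive step, and the one I expect to be the main obstacle, is arranging $\gcd(b_1,b_2)=1$ while keeping $b_1+b_2=b$: any choice with larger gcd only reaches a proper subideal $\gp{x^d-1}$ and would fail for a generic even-weight $f$. The uniform fix is $b_1=1$, $b_2=b-1$, giving $\gcd(b_1,b_2)=1$ and image $\gp{x-1}$, which already contains $f$ by the characterization of $V$; a solution $(g_1,g_2)$ therefore exists, realizing $(\delta,f)$ as a product of two squares. I would note that this choice also covers the degenerate cases cleanly: $\delta=0$ (take $b_2=-1$) and $\delta=2$ (where $b_2=0$, the second factor is trivial, and the element is in fact a single square). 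Combining the two bounds yields that the verbal width of $V$ is exactly $2$.
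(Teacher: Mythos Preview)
Your proof is correct and follows essentially the same route as the paper. Both arguments take $b_1=1$ and $b_2=b-1$ (the paper writes $\delta=2(1+k)$ and uses $(1,g)^2(k,0)^2$, which is your choice with $g_2=0$); your witness $(0,1+x)$ for the lower bound is a slightly simpler variant of the paper's $atat^3$. The only difference is presentational: where the paper writes down the explicit cumulative sum $g(j)=\sum_{i\le j}f(i)$ solving $(1+x)g=f$, you invoke Corollary~\ref{cor:sequencemulti} to say that the image of $(g_1,g_2)\mapsto x^{-2b_2}(x^{-1}+1)g_1+(x^{-b_2}+1)g_2$ is the ideal $\gp{x-1}$ and hence contains $f$.
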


\begin{proof}
By Lemma \ref{le:V-membership}, $a t a t^3\in V$
(its image is trivial in $L_2/V$)
and, by Lemma \ref{le:square}, $a t a t^3$ is not a square.
Hence, the verbal width of $V$ is greater than $1$.

Now, suppose that $(\delta,f)\in V$.
Then $\delta=2(1+k)$ for some $k\in\MZ$. Define
$g\colon\MZ\to \MZ_2$ by
$$
g(j)=\sum_{i=-\infty}^j f(i)
\ \ \mbox{ for } x\in\MZ.
$$
It is easy to check that
$$
(\delta,f) = (1,g)^2+(k,0)^2.
$$
Hence, every $(\delta,f)\in V$ can be expressed as a product of two squares
and the verbal width of $V$ is $2$.
\end{proof}

Proposition \ref{pr:V-verbal-width} implies
that there are exactly two cases for non-orientable equations
over the lamplighter group: genus one and genus two
(same as genus more than one).
The latter case is easy because $V$ is normal.
Consider a non-orientable equation of genus two
\begin{equation}\label{eq:non-orientable-g2}
x^2y^2\prod z_i^{-1} c_i z_i=1.
\end{equation}

\begin{proposition}
The Diophantine problem for equations 
\eqref{eq:non-orientable-g2}
is decidable in linear time.
\end{proposition}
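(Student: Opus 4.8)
The plan is to reduce solvability of \eqref{eq:non-orientable-g2} to the single membership question $c_1\cdots c_k\in V$, which by Lemma \ref{le:V-membership} can be decided in linear time. The two facts that make this reduction work are Proposition \ref{pr:V-verbal-width} (the verbal width of $V$ is $2$) and the normality of $V$ together with the fact that $L_2/V$ is abelian.

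First I would fix the conjugator variables $z_1,\dots,z_k$ and write $W=\prod_{i=1}^k z_i^{-1}c_iz_i$, so that \eqref{eq:non-orientable-g2} becomes $x^2y^2=W^{-1}$. I claim this is solvable in $x,y$ if and only if $W\in V$. Indeed, every product of two squares lies in $V$ by definition, while Proposition \ref{pr:V-verbal-width} shows conversely that every element of $V$ is a product of two squares; hence the set $\{x^2y^2\mid x,y\in L_2\}$ is exactly $V$, and $x^2y^2=W^{-1}$ is solvable precisely when $W^{-1}\in V$, equivalently $W\in V$. Next I would eliminate the dependence on the $z_i$: since $V$ is normal and $L_2/V$ is abelian (Lemma \ref{le:V-membership}), the image of $W=\prod_i z_i^{-1}c_iz_i$ in $L_2/V$ coincides with the image of $c_1\cdots c_k$, independently of the chosen $z_i$. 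Therefore $W\in V$ holds for some (equivalently every) choice of $z_1,\dots,z_k$ if and only if $c_1\cdots c_k\in V$. Combining the two steps, \eqref{eq:non-orientable-g2} has a solution if and only if $c_1\cdots c_k\in V$.

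Finally, the test is linear. By Lemma \ref{le:V-membership}, $c_1\cdots c_k\in V$ exactly when $\sigma_a(c_1\cdots c_k)=0$ and $\sigma_t(c_1\cdots c_k)$ is even; since $\sigma_a$ and $\sigma_t$ are homomorphisms into abelian groups, these values equal $\sum_{i=1}^k\sigma_a(c_i)$ and $\sum_{i=1}^k\sigma_t(c_i)$, each computable in time $O\!\left(\sum_{i=1}^k|c_i|\right)$ by counting exponents of $a$ modulo $2$ and exponents of $t$. I do not expect a genuine obstacle: all the difficulty has been front-loaded into Proposition \ref{pr:V-verbal-width} and Lemma \ref{le:V-membership}, and the only point requiring care is the observation that the normality of $V$ (and abelianness of the quotient) renders the conjugating variables $z_i$ entirely irrelevant to solvability, so that the quadratic equation collapses to a membership test for the fixed product of coefficients.
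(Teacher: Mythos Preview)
Your proof is correct and follows essentially the same approach as the paper: reduce solvability to $\prod z_i^{-1}c_iz_i\in V$ via Proposition \ref{pr:V-verbal-width}, then observe this is equivalent to $c_1\cdots c_k\in V$ (checked via $\sigma_a$ and $\sigma_t$) because $L_2/V$ is abelian. You spell out the irrelevance of the conjugators $z_i$ more explicitly than the paper does, but the argument is the same.
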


\begin{proof}
By Proposition \ref{pr:V-verbal-width},
\eqref{eq:non-orientable-g2} has a solution
if and only if
$\prod z_i^{-1} c_i z_i \in V$
for some $z_1,\dots,z_k\in L_2$, which is true if and only if
$\sigma_t(c_1\cdots c_k)$ is even and $\sigma_a(c_1\cdots c_k)=0$.
The obtained condition can be checked in linear time.
\end{proof}

Consider a non-orientable equation of genus one
\begin{equation}\label{eq:non-orientable-g1}
x^2\prod z_i^{-1} c_i z_i=1.
\end{equation}
Clearly, if the equation has a solution, then 
$2\sigma_t(x)=-\sum \sigma_t(c_i)$, i.e., $\sigma_t(x)$
is uniquely defined by the equation.

\begin{lemma}\label{le:dropping-genus}
If $c_1\cdots c_k=(0,f)$, then
$$
\underbrace{x^2\prod z_i^{-1} c_i z_i=1}_W \mbox{ has a solution }
\ \Leftrightarrow\ 
\underbrace{\prod z_i^{-1} c_i z_i=1}_{W'} \mbox{ has a solution.}
$$
\end{lemma}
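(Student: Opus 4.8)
The plan is to exploit the hypothesis $c_1\cdots c_k=(0,f)$, which says precisely that $\sigma_t(c_1\cdots c_k)=0$, to force the variable $x$ in any solution of $W$ to land in the subgroup $Y=\ker(\sigma_t)$. Inside $Y$ every element is an involution, so the square term $x^2$ disappears and $W$ collapses to $W'$. This is in the same spirit as the proof of Proposition \ref{pr:oqe_p}, where applying the projection homomorphisms reduces an equation to a condition on its constants alone.

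The direction ``$\Leftarrow$'' is immediate: if $z_1,\dots,z_k$ solve $W'$, then $\prod z_i^{-1}c_iz_i=1$, and taking $x=1$ gives $x^2\prod z_i^{-1}c_iz_i=1$, so the same $z_i$ together with $x=1$ solve $W$. For ``$\Rightarrow$'', suppose $x,z_1,\dots,z_k$ solve $W$. First I would apply the homomorphism $\sigma_t\colon L_2\to\MZ$. Since $\MZ$ is abelian and torsion-free, each conjugating pair $z_i^{-1},z_i$ cancels and we obtain
$$
2\sigma_t(x)+\sum_{i=1}^k\sigma_t(c_i)=0.
$$
The hypothesis gives $\sum_{i=1}^k\sigma_t(c_i)=\sigma_t(c_1\cdots c_k)=0$, so $2\sigma_t(x)=0$ and hence $\sigma_t(x)=0$; that is, $x=(0,f_x)\in Y$. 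The decisive step is then the characteristic-two collapse: applying the squaring formula $(\delta,f)^2=(2\delta,f^\delta+f)$ with $\delta=0$ yields $x^2=(0,f_x+f_x)=(0,0)=1$, because the lamp values lie in $\MZ_2$. Substituting $x^2=1$ back into $W$ leaves $\prod z_i^{-1}c_iz_i=1$, so the very same $z_1,\dots,z_k$ solve $W'$.

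I do not expect a serious obstacle here; the points needing care are only that $\sigma_t$ of a product of conjugates equals $\sum_i\sigma_t(c_i)$ (because $\sigma_t$ is a homomorphism into an abelian group, exactly as used in Proposition \ref{pr:oqe_p}) and the identification of the hypothesis with $\sigma_t(c_1\cdots c_k)=0$. The genuine content of the lemma is the observation that $Y$ has exponent two, which is precisely what makes the genus-one square term vanish.
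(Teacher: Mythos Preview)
Your proof is correct and follows essentially the same approach as the paper: both directions are argued identically, with the forward direction hinging on $\sigma_t(x)=0$ forcing $x\in Y$, and then the characteristic-two identity $(0,f_x)^2=1$ killing the square term. You are simply more explicit than the paper about why $\sigma_t(x)=0$ follows from the hypothesis, which the paper states just before the lemma rather than inside its proof.
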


\begin{proof}
Name the genus one equation $W$ and the spherical equation $W'$.
If $W'$ has a solution, then the same assignment for $z_1,\dots,z_k$
and $x=1$ gives a solution for $W$.
Conversely, if $W$ has a solution with $x=(\delta,f)$, then
$\delta=0$.
Notice that $(0,f)^2 = 1$ in $L_2$
for every $f\colon\MZ\to\MZ_2$. Hence, we may assume that $f=0$
and, $W'$ has a solution.
\end{proof}

\begin{proposition}
The Diophantine problem 
for equations \eqref{eq:non-orientable-g1} is $\NP$-hard.
\end{proposition}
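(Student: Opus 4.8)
The plan is to give a polynomial-time Karp reduction from $\DP_{SPH_0}$, which is $\NP$-hard by Corollary \ref{co:SPH-0-hard}, to the Diophantine problem for equations of the form \eqref{eq:non-orientable-g1}. The engine of the reduction will be Lemma \ref{le:dropping-genus}, which lets us strip the square $x^2$ from a genus-one equation precisely when the product of its coefficients lies in $Y=\ker(\sigma_t)$. The reason to reduce from the restricted problem $\DP_{SPH_0}$, rather than from general $\DP_{SPH}$, is exactly that its coefficients are guaranteed to lie in $Y$.

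First I would take an arbitrary instance of $\DP_{SPH_0}$: a spherical equation $W'=\prod_{j=1}^k z_j^{-1}c_jz_j=1$ whose coefficients $c_1,\dots,c_k$ all define elements of $Y$, say $c_j=(0,f_{c_j})$. I would then form the genus-one non-orientable equation $W=x^2\prod_{j=1}^k z_j^{-1}c_jz_j=1$ by prepending the single syllable $x^2$ to the spherical equation. This transformation is computable in linear time and leaves the coefficients untouched, so it is a legitimate reduction of instances.

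Next I would verify the hypothesis of Lemma \ref{le:dropping-genus}. Since each $c_j\in Y=\ker(\sigma_t)$, every $\MZ$-component vanishes, and the product telescopes to $c_1\cdots c_k=(0,\sum_{j=1}^k f_{c_j})=(0,f)$ for some $f\in\base$. This is exactly the condition $c_1\cdots c_k=(0,f)$ demanded by Lemma \ref{le:dropping-genus}, which therefore yields that $W$ has a solution if and only if $W'$ does. Consequently, any algorithm deciding \eqref{eq:non-orientable-g1} on the constructed instance also decides the original $\DP_{SPH_0}$ instance, and the $\NP$-hardness of $\DP_{SPH_0}$ transfers to equations of type \eqref{eq:non-orientable-g1}.

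I do not expect a substantial obstacle here; the argument is essentially a reduction-by-composition. The only point requiring care is the choice to reduce from $\DP_{SPH_0}$ instead of the unrestricted spherical problem: it is the confinement of the coefficients to $Y$ that forces $\sigma_t(c_1\cdots c_k)=0$ and hence places us in the regime where $x^2$ can be dropped via Lemma \ref{le:dropping-genus}. Reducing from general $\DP_{SPH}$ would instead require separately engineering the coefficient product to have trivial $t$-exponent, whereas $\DP_{SPH_0}$ supplies this for free, so the reduction becomes immediate.
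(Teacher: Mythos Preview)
Your proposal is correct and follows essentially the same approach as the paper: reduce from $\DP_{SPH_0}$ by prepending $x^2$, note that the coefficients lie in $Y$ so that $c_1\cdots c_k=(0,f)$, and invoke Lemma~\ref{le:dropping-genus} together with Corollary~\ref{co:SPH-0-hard}. Your additional remark on why one reduces from $\DP_{SPH_0}$ rather than unrestricted $\DP_{SPH}$ is a nice clarification but does not alter the argument.
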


\begin{proof}
For an instance $W':\prod_{i=1}^{k} z_i^{-1}c_{i}z_i=1$ of $\DP_{SPH_0}$, construct an equation $W: x^2\prod_{i=1}^{k} z_i^{-1}c_{i}z_i=1$ of type 
\eqref{eq:non-orientable-g1}.
By definition of $\DP_{SPH_0}$,  $c_1,\dots,c_k\in Y$ and, hence,
the assumption of Lemma \ref{le:dropping-genus} holds,
which implies that $W$ has a solution if and only if $W'$ has a solution.
This establishes a polynomial-time reduction from $\DP_{SPH_0}$,
which is $\NP$-hard by Corollary \ref{co:SPH-0-hard},
to the Diophantine problem for equations \eqref{eq:non-orientable-g1}.
\end{proof}

\begin{theorem}\label{th:genus-one-nonorientable-NP}
The Diophantine problem for equations \eqref{eq:non-orientable-g1} 
belongs to $\NP$.
\end{theorem}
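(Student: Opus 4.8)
The plan is to reduce the genus-one non-orientable equation to a more tractable condition by exploiting the structure already established for spherical equations, showing that a positive instance admits a polynomial-size certificate. We want to solve
\[
x^2\prod_{i=1}^k z_i^{-1} c_i z_i = 1.
\]
Writing $x=(\delta_x,f_x)$ and using the square formula $(\delta_x,f_x)^2=(2\delta_x,(x^{-\delta_x}+1)f_x)$, the $\sigma_t$-projection forces $2\delta_x = -\sum_i \sigma_t(c_i)$, so $\delta_x$ is uniquely determined and computable in linear time (and the instance is immediately negative if this quantity is not an integer). This removes one degree of freedom at the outset.

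Next I would split into cases according to whether $\delta_x = 0$. If $\delta_x=0$, then $x^2=(0,(x^0+1)f_x)=(0,0)$ is trivial, so by Lemma \ref{le:dropping-genus} the equation has a solution if and only if the associated spherical equation $W'\colon \prod_i z_i^{-1}c_iz_i=1$ does; that case is handled by Theorem \ref{th:spherical-NP}, which already provides an $\NP$-certificate. The substantive case is $\delta_x\neq 0$, where $x^2=(2\delta_x, (x^{-\delta_x}+1)f_x)$ contributes a nontrivial lamp-configuration. Here I would carry out the same direct computation as in Proposition \ref{pr:spherical-reduction}, now with the extra term from $x^2$, to reduce the whole equation to a condition of the form ``$\sum_i x^{\delta_i} f_{c_i}$ lies in the image of the appropriate linear map, modulo the lamp-pattern realizable by $x^2$.'' Using the square characterization of Lemma \ref{le:square} together with Proposition \ref{pr:transform-components}, the solvability should reduce to a finite collection of shift parameters $\delta_1,\dots,\delta_k$ (one per conjugated coefficient) plus the fixed $\delta_x$, living in a quotient $\MZ_2^{\MZ_{b}}$ for an explicitly computable modulus $b$ tied to $\gcd$ data.

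The certificate is then the tuple $(\delta_1,\dots,\delta_k)$, and the crux is bounding these shifts polynomially in $|W|$. When the relevant gcd $b$ is positive we get the free bound $0\le \delta_i < b \le |W|$ exactly as in \eqref{eq:delta_i-bound-mod-W}. When $b=0$ the shifts range over $\MZ$, and I would reuse the clustering argument from the proof of Theorem \ref{th:spherical-NP}: the supports $\supp(f_{c_i}^{\delta_i})$ together with the support of the $x^2$-term must tile the target configuration, so they decompose into clusters of diameter bounded by $\sum_{i\in C}\diam(f_{c_i}) \le |W|/2$ via Lemma \ref{le:inv2}, and recentering each cluster forces $|\delta_i|\le |W|$. \textbf{The main obstacle} I anticipate is the bookkeeping for the $x^2$-contribution inside the clustering step: unlike the pure spherical case, one support block is not a conjugate of a fixed coefficient but the freely-chosen pattern $(x^{-\delta_x}+1)f_x$, whose diameter is not a priori bounded by the input. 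The remedy is to observe that, once the $\delta_i$ are centered, the equation determines $(x^{-\delta_x}+1)f_x$ as the negative of $\sum_i f_{c_i}^{\delta_i}$, whose support already lies in a window of size $O(|W|)$; since $x^{-\delta_x}+1$ is invertible on that image modulo $\langle x^{\delta_x}-1\rangle$ precisely when the square condition of Lemma \ref{le:square} holds, one can exhibit a concrete bounded $f_x$ realizing it. Verifying that a guessed $(\delta_1,\dots,\delta_k)$ yields a genuine solution then amounts to checking a single membership/squareness condition, which is a polynomial-time computation, completing the $\NP$ argument.
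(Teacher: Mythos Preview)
Your overall plan matches the paper's: split on $\delta_x$, reduce the $\delta_x=0$ case to the spherical equation via Lemma~\ref{le:dropping-genus}, and for $\delta_x\neq 0$ expand the second component and bound the shift parameters by a gcd. Where you diverge is in the $\delta_x\neq 0$ case, and there you are over-engineering.

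The key observation you miss is that in characteristic~$2$ one has $1+x^{-\delta_x}=1-x^{-\delta_x}$, so the $f_x$-term in the expanded second component has \emph{exactly} the same shape $(1-x^{-\delta_x})(\cdot)$ as the $f_i$-terms coming from the conjugates. Proposition~\ref{pr:transform-components} then swallows $f_x$ together with the $f_i$'s, and the relevant modulus is $d=\gcd(\delta_x,\delta_{c_1},\dots,\delta_{c_k})$. Since $\delta_x\neq 0$, this $d$ is automatically positive and bounded by $|W|$, so the certificate is simply $(\delta_1',\dots,\delta_k')\in\MZ_d^k$. No clustering argument is needed at all in this branch.

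Your ``main obstacle'' subcase $b=0$ with $\delta_x\neq 0$ is in fact empty: $b=0$ forces all $\delta_{c_i}=0$, whence $2\delta_x=-\sum\delta_{c_i}=0$. And your proposed remedy is not right as stated: $x^{-\delta_x}+1$ is \emph{zero} modulo $\langle x^{\delta_x}-1\rangle$, not invertible; the correct condition for $(x^{-\delta_x}+1)f_x=g$ to be solvable is $\pi_{\delta_x}(g)=0$. But once you use the characteristic-$2$ identity, this whole detour disappears.
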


\begin{proof}
Consider an equation $W=1$ of type \eqref{eq:non-orientable-g1}
with constants $c_i=(\delta_{c_i},f_{c_i})$,
that has a solution 
$x=(\delta_x,f_x),z_1=(\delta_1,f_1),\dots,z_k=(\delta_k,f_k)$. 
First, notice that
$$
\delta_x = \sigma_t(x) = 
-\tfrac{1}{2}(\delta_{c_1}+\dots+\delta_{c_k})\in\MZ.
$$
If $\delta_x=0$, then, by Lemma \ref{le:dropping-genus},
the equation has a solution if and only if the spherical
equation obtained by dropping $x^2$ has a solution.
By Theorem \ref{th:spherical-NP}, spherical equations have $\NP$-certificates.

If $\delta_x\ne 0$, then $\delta_{c_i}\ne 0$ for some $i$, and
$$
0<\gcd(\delta_x,\delta_{c_1},\dots,\delta_{c_k}) = d \le |W|.
$$
The second component of the equation $W=1$ (as an element of $\MZ\ltimes \MZ_2^\MZ$) is of the form
\begin{equation}\label{eq:component2-a}
(1+x^{-\delta_x})x^{-\Delta_0}f_x+
\sum_{i=1}^k 
x^{-\Delta_{i}}
\left[
(1-x^{-\delta_{c_i}})f_{i}+
x^{-\delta_{i}}f_{c_i}
\right]
=0,
\end{equation}
where $\Delta_i=\delta_{c_{i+1}}+\dots+\delta_{c_k}$ for $i=0,\dots,k$,
and variables 
$f_x,f_1,\dots,f_k\in\MZ_2^\MZ$ and 
$\delta_1,\dots,\delta_k\in\MZ$. Notice that $$(1+x^{-\delta_x})x^{-\Delta_0}f_x=(1-x^{-\delta_x})x^{-\Delta_0}f_x$$ because $\MZ_2$ has characteristic $2$, so \eqref{eq:component2-a} becomes
\begin{equation}\label{eq:component2-new}
\left((1-x^{-\delta_x})x^{-\Delta_0}f_x+\sum_{i=1}^k 
(1-x^{-\delta_{c_i}})x^{-\Delta_{i}}f_{i}\right)+
\sum_{i=1}^k
x^{-\delta_{i}-\Delta_i}f_{c_i}
=0.
\end{equation}
By Proposition \ref{pr:transform-components},
\eqref{eq:component2-new} has a solution if and only if
the following equation:
\begin{equation}\label{eq:component2-b}
x^{\delta_1'}f_{c_1}+\dots+x^{\delta_k'}f_{c_k}=0
\end{equation}
has a solution in $\MZ_2^{\MZ_d}$ with $\delta_1',\dots,\delta_k' \in\MZ_d$. A solution of \eqref{eq:component2-b} establishes an $\NP$-certificate
in this case.
\end{proof}

\begin{corollary}\label{co:equation3-complexity}
There is an algorithm that for a given non-orientable
equation \eqref{eq:non-orientable-g1}, decides 
if it has a solution or not in time $O(|W|^k)$.
\end{corollary}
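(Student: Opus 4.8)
The plan is to follow the two-case structure of Corollary~\ref{co:equation1-complexity}, reusing the reduction already carried out inside the proof of Theorem~\ref{th:genus-one-nonorientable-NP}. First I would compute the components $(\delta_{c_i},f_{c_i})$ and the forced value $\delta_x=-\tfrac{1}{2}\sum_{i=1}^k \delta_{c_i}$; if $\sum_{i=1}^k \delta_{c_i}$ is odd, then $\delta_x\notin\MZ$ and I would immediately report that there is no solution, all in linear time. Otherwise I would branch on whether $\delta_x$ vanishes.

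When $\delta_x=0$ I would note that $\sum_{i=1}^k \delta_{c_i}=0$, so that $c_1\cdots c_k=(0,f)$ for some $f$ and the hypothesis of Lemma~\ref{le:dropping-genus} holds; the genus-one equation is then solvable exactly when the spherical equation obtained by erasing the $x^2$ term is, and I would simply invoke the $O(|W|^k)$ algorithm of Corollary~\ref{co:equation1-complexity}. This case therefore costs nothing beyond the spherical routine.

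When $\delta_x\neq 0$ I would use the reduction from the proof of Theorem~\ref{th:genus-one-nonorientable-NP}: with $d=\gcd(\delta_x,\delta_{c_1},\dots,\delta_{c_k})$, so that $0<d\le|W|$, solvability is equivalent to the existence of shifts $\delta_1',\dots,\delta_k'\in\MZ_d$ with $x^{\delta_1'}f_{c_1}+\dots+x^{\delta_k'}f_{c_k}=0$ in $\MZ_2^{\MZ_d}$. Since multiplying the whole sum by the unit $x^\Delta$ preserves vanishing, I would fix $\delta_k'=0$ and enumerate the tuples $(\delta_1',\dots,\delta_{k-1}')\in\MZ_d^{k-1}$, of which there are at most $|W|^{k-1}$; for each tuple I would form $\sum_{i=1}^{k-1}x^{\delta_i'}\pi_d(f_{c_i})$ and test whether it equals $\pi_d(f_{c_k})$.

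The one delicate point --- and the only real obstacle to meeting the bound --- is the per-tuple cost, which a crude estimate inflates by a spurious factor of $k$. I would dispose of it exactly as in Corollary~\ref{co:equation1-complexity}: each $\pi_d(f_{c_i})$ has at most $|\supp(f_{c_i})|\le|c_i|$ lit positions, so assembling the full shifted sum costs $O(\sum_{i=1}^k|c_i|)=O(|W|)$ rather than $O(k|W|)$, and comparing two length-$d$ bitstrings is $O(d)=O(|W|)$. Multiplying the $O(|W|^{k-1})$ tuples by the $O(|W|)$ cost per tuple yields the claimed $O(|W|^k)$ running time. (The degenerate instance $k=0$ reduces to $x^2=1$, solvable in constant time.)
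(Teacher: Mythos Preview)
Your proposal is correct and follows essentially the same route as the paper: branch on $\delta_x$, reduce to the spherical case via Lemma~\ref{le:dropping-genus} and Corollary~\ref{co:equation1-complexity} when $\delta_x=0$, and otherwise enumerate $(\delta_1',\dots,\delta_{k-1}')\in\MZ_d^{k-1}$ with $\delta_k'=0$ and test each tuple in $O(|W|)$ time. Your explicit parity check, the per-tuple cost argument via $\sum_i|c_i|\le|W|$, and the handling of $k=0$ are welcome details that the paper leaves implicit.
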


\begin{proof}
We can reuse parts of the proof 
of Theorem \ref{th:genus-one-nonorientable-NP}.
If $\delta_x=0$, then the problem reduces to 
a spherical equation that,
by Corollary \ref{co:equation1-complexity},
can be solved in time $O(|W|^k)$.

If $\delta_x\ne 0$, then
the problem reduces to the equation \eqref{eq:component2-b} 
with unknowns $\delta_1,\dots,\delta_{k}\in\MZ_d$,
which can be solved
as in the second part 
of the proof of Corollary \ref{co:equation1-complexity}
(the case when $\delta\ne 0$),
by enumerating all tuples $\delta_1,\dots,\delta_{k-1}\in\MZ_d$
and testing if the tuple$(\delta_1',\dots,\delta_{k-1}',0)$ satisfies
\eqref{eq:component2-b}. This can be done in $O(|W|^k)$ time.
\end{proof}

\section{Parametric complexity}

In this section, we summarize some of our previous results
and state that the Diophantine problem for quadratic equations
(of all three types \eqref{eq:spherical}, \eqref{eq:orientable}, 
and \eqref{eq:nonorientable}) over $L_2$
can be solved in polynomial time for any fixed bound on the number 
of variables $k$, i.e., the Diophantine 
problem belongs to the complexity class $\XP$ 
(``slicewise'' polynomial time).

\begin{corollary}
There is an algorithm that solves the Diophantine problem 
for quadratic equations $W=1$ of type
\eqref{eq:spherical},
\eqref{eq:orientable},
\eqref{eq:nonorientable}
over the lamplighter group $L_2$ in time $O(|W|^k)$.
\end{corollary}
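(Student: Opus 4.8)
The plan is to prove the statement by a straightforward case analysis on the standard form of $W$, since each form has already been treated individually in the preceding sections; the corollary is essentially a bookkeeping assembly of those results, together with the observation that the linear-time subroutines are absorbed into the target bound $O(|W|^k)$. First I would note that, for an input $W=1$ presented in one of the forms \eqref{eq:spherical}, \eqref{eq:orientable}, \eqref{eq:nonorientable}, one can read off in linear time which form it is and recover the invariants $g$ and $k$ (by scanning $W$ and counting the commutator/square prefix and the conjugate blocks $z_j^{-1}c_j z_j$). The algorithm then branches accordingly.

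In the spherical case \eqref{eq:spherical} I would invoke Corollary \ref{co:equation1-complexity} directly, which decides solvability in time $O(|W|^k)$. In the orientable case \eqref{eq:orientable} with $g\ge 1$, Proposition \ref{pr:oqe_p} decides the problem in linear time $O(|W|)$, which is dominated by $O(|W|^k)$. In the non-orientable case \eqref{eq:nonorientable} with $g\ge 1$ I would split once more on the genus: if $g\ge 2$ the linear-time algorithm for equations of the form \eqref{eq:non-orientable-g2} applies (Proposition \ref{pr:V-verbal-width} guarantees that the genus collapses to the two-square case), and if $g=1$ then Corollary \ref{co:equation3-complexity} gives the bound $O(|W|^k)$. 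Since these branches are exhaustive and each runs within $O(|W|^k)$, the combined procedure establishes the claim.

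There is no genuine mathematical obstacle here — all of the substantive work was carried out in the earlier sections — so the only points to verify are that the case analysis is exhaustive and that the per-case complexity bounds assemble into a single uniform statement. The one subtlety worth flagging is the uniformity of the bound: the orientable and the higher-genus non-orientable algorithms run in linear time, so to read the overall complexity as $O(|W|^k)$ one must observe that $O(|W|)$ is subsumed by $O(|W|^k)$ in the relevant regime (i.e.\ whenever $k\ge 1$, and trivially otherwise since the relevant equations are decided outright). This confirms that, for every fixed $k$, the running time is polynomial of degree depending only on $k$, which is exactly the assertion that the Diophantine problem for quadratic equations over $L_2$ lies in $\XP$.
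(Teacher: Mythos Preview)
Your proposal is correct and follows essentially the same approach as the paper: a direct case analysis invoking Corollary~\ref{co:equation1-complexity}, Proposition~\ref{pr:oqe_p}, and Corollary~\ref{co:equation3-complexity}. Your write-up is in fact slightly more explicit than the paper's one-line proof, since you separately dispatch the non-orientable $g\ge 2$ case via the linear-time algorithm, whereas the paper leaves that implicit.
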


\begin{proof}
Follows from 
Corollary \ref{co:equation1-complexity},
Proposition \ref{pr:oqe_p},
and 
Corollary \ref{co:equation3-complexity}.
\end{proof}

\begin{corollary}
Fix $K\in \MN$. The Diophantine problem for the class of 
quadratic equations with $k\le K$ is decidable in polynomial time.
The Diophantine problem for quadratic equations over the lamplighter
group $L_2$ belongs to $\XP$.
\end{corollary}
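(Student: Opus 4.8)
The plan is to combine the classification of quadratic equations with the preceding corollary, which already handles each standard form, and then to track how the cost of reducing to standard form affects the exponent. Given an arbitrary quadratic equation $W=1$, I would first invoke the classification recalled in the introduction: there is an automorphism $\phi\in\Aut(F\ast L_2)$, identical on $L_2$ and computable in time $O(|W|^2)$, carrying $W$ to an equation $V:=\phi(W)$ in exactly one of the three standard forms \eqref{eq:spherical}, \eqref{eq:orientable}, \eqref{eq:nonorientable}. Because $\phi$ fixes $L_2$ pointwise, the equations $W=1$ and $V=1$ are equivalent and hence have the same answer, and because the number of coefficients $k$ is an invariant of the equivalence class, $V$ has the same $k$ as $W$. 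Since $\phi$ is produced in time $O(|W|^2)$, its output satisfies $|V|=O(|W|^2)$.

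Next I would feed $V$ to the algorithm of the preceding corollary, which decides solvability of any standard-form equation with $k$ coefficients in time $O(|V|^k)$. Substituting $|V|=O(|W|^2)$ yields a total running time of $O(|W|^{2k})$ for the whole procedure, the $O(|W|^2)$ cost of the reduction being absorbed into this bound for $k\ge 1$. For the orientable equations of genus $g\ge1$ and the non-orientable equations of genus $g\ge2$ this estimate is far from tight, as those subcases run in linear time, but the uniform bound $O(|W|^{2k})$ is all that is needed.

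Finally I would read off the two assertions. Fixing a bound $K$ on the number of coefficients $k$ turns the estimate into $O(|W|^{2K})$, a polynomial of fixed degree in the input length, which proves that the Diophantine problem restricted to $k\le K$ is decidable in polynomial time. Taking $k$ as the parameter, the running time has the shape $|W|^{f(k)}$ with $f(k)=2k$ a computable function, which is precisely the membership criterion for the class $\XP$. I do not anticipate a genuine obstacle: the statement is essentially a bookkeeping synthesis of the classification with the per-type bounds already established. The only point requiring care is that the passage to standard form inflates the length quadratically, so one must check that this merely doubles the exponent rather than destroying polynomiality for fixed $k$.
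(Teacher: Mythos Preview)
Your argument is correct. In the paper the corollary is stated without proof, as an immediate consequence of the preceding corollary (which already gives an $O(|W|^k)$ algorithm for equations in the standard forms \eqref{eq:spherical}, \eqref{eq:orientable}, \eqref{eq:nonorientable}); the paper's Section~6 explicitly restricts attention to equations ``of all three types \eqref{eq:spherical}, \eqref{eq:orientable}, and \eqref{eq:nonorientable}'', so no separate reduction step is needed there. Your write-up is slightly more general: you treat an \emph{arbitrary} quadratic equation, invoke the $O(|W|^2)$ reduction to standard form, observe that $k$ is preserved, and absorb the quadratic blow-up in length into the exponent, obtaining $O(|W|^{2k})$. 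This is a legitimate strengthening of what the paper literally asserts, and the bookkeeping is sound. One cosmetic point: your absorption argument ``for $k\ge 1$'' leaves the coefficient-free case $k=0$ (orientable of genus $g\ge1$, or non-orientable of genus $g\ge1$) to be handled separately; but those cases are linear-time by Propositions~\ref{pr:oqe_p} and the genus-$\ge 2$ analysis (and $x^2=1$ is trivial), so the overall polynomial bound for fixed $K$ is unaffected.
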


\bibliography{main_bibliography}

\begin{thebibliography}{10}

\bibitem{Bartholdi-Groth-Lysenok:2022}
L.~{Bartholdi}, T.~{Groth}, and I.~{Lysenok}.
\newblock {Commutator width in the first {G}rigorchuk group}.
\newblock {\em Groups Geom. Dyn.}, 16:493--522, 2022.

\bibitem{Bartholdi:2006}
L.~Bartholdi and Z.~Šunik.
\newblock Some solvable automaton groups, 2006.

\bibitem{Baumslag:1961}
G.~{Baumslag}.
\newblock {Wreath products and finitely presented groups}.
\newblock {\em Math. Z.}, 75:22--28, 1961.

\bibitem{Bonanome_Dean_Dean_2018}
M.~{Bonanome}, M.~{Dean}, and J.~{Dean}.
\newblock The lamplighter group ${L_2}$.
\newblock In {\em A sampling of remarkable groups: Thompson’s, self-similar, Lamplighter, and Baumslag-Solitar}, pages 105--132. Birkhauser, 2018.

\bibitem{Cleary_Taback:2005}
S.~{Cleary} and J.~{Taback}.
\newblock {Dead end words in lamplighter groups and other wreath products}.
\newblock {\em Quart. J. Math. Oxford}, 56:165--178, 2005.

\bibitem{Comerford_Edmunds:1981}
L.~{Comerford} and C.~{Edmunds}.
\newblock {Quadratic equations over free groups and free products}.
\newblock {\em J. Algebra}, 68:276--297, 1981.

\bibitem{Diekert-Robson:1999}
V.~{Diekert} and J.~{Robson}.
\newblock {\em Quadratic word equations}, pages 314--326.
\newblock Springer Berlin Heidelberg, Berlin, Heidelberg, 1999.

\bibitem{Garey-Johnson:1979}
M.~{Garey} and J.~{Johnson}.
\newblock {\em Computers and Intractability: A Guide to the Theory of NP-Completeness}.
\newblock W. H. Freeman, 1979.

\bibitem{Grigorchuk-Kurchanov:1992}
R.~{Grigorchuk} and P.~{Kurchanov}.
\newblock On quadratic equations in free groups.
\newblock In {\em Proc. Int. Conf. on Algebra Dedicated to the Memory of A.~I.~Malcev}, volume 131 of {\em Contemporary Mathematics}, pages 159--171. American Mathematical Society, 1992.

\bibitem{Grigorchuk-Lysenok:1992}
R.~{Grigorchuk} and I.~{Lysenok}.
\newblock A description of solutions of quadratic equations in hyperbolic groups.
\newblock {\em International Journal of Algebra and Computation}, 2(3):237--274, 1992.

\bibitem{Grigorchuk_Zuk:2001}
R.~{Grigorchuk} and A.~{Żuk}.
\newblock {The lamplighter group as a group generated by a $2$-state automaton, and its spectrum}.
\newblock {\em Geometriae Dedicata}, 87:209--244, 2001.

\bibitem{Vershik_Kaimanovich:1983}
V.~{Kaimanovich} and A.~{Vershik}.
\newblock {Random walks on discrete groups: Boundary and entropy}.
\newblock {\em Ann. Probab.}, 11:457--490, 1983.

\bibitem{Kharlampovich-Lopez-Miasnikov:2020}
O.~{Kharlampovich}, L.~{Lopez}, and A.~{Miasnikov}.
\newblock {Diophantine Problem in Some Metabelian Groups}.
\newblock {\em Mathematics of Computation}, 89:2507, 2020.

\bibitem{Kharlampovich-Lysenok-Myasnikov-Touikan:2010}
O.~{Kharlampovich}, I.~Lysenok, A.~Miasnikov, and N.~Touikan.
\newblock {The solvability problem for quadratic equations over free groups is NP-complete}.
\newblock {\em Theor. Comput. Syst.}, 47:250--258, 2010.

\bibitem{Kharlampovich-Taam:2017}
O.~{Kharlampovich}, A.~{Mohajeri}, A.~{Taam}, and A.~{Vdovina}.
\newblock Quadratic equations in hyperbolic groups are {NP}-complete.
\newblock {\em Transactions of the American Mathematical Society}, 369(9):6207--6238, 2017.

\bibitem{KMP}
D.~{Knuth}, J.~H. {Morris}, and V.~{Pratt}.
\newblock Fast pattern matching in strings.
\newblock {\em SIAM J. Comput.}, 6:323--350, 1977.

\bibitem{Lysenok-Miasnikov-Ushakov:2016}
I.~{Lysenok}, A.~{Miasnikov}, and A.~{Ushakov}.
\newblock {Quadratic equations in the Grigorchuk group}.
\newblock {\em Groups, Geometry, and Dynamics}, 10:201--239, 2016.

\bibitem{Lysenok-Ushakov:2015}
I.~{Lysenok} and A.~{Ushakov}.
\newblock {Spherical quadratic equations in free metabelian groups}.
\newblock {\em Proc. Amer. Math. Soc.}, 144:1383--1390, 2016.

\bibitem{Lysenok-Ushakov:2021}
I.~{Lysenok} and A.~{Ushakov}.
\newblock Orientable quadratic equations in free metabelian groups.
\newblock {\em Journal of Algebra}, 581:303--326, 2021.

\bibitem{Malcev:1962}
A.~{Malcev}.
\newblock {On equation $zxyx^{-1}y^{-1}z^{-1}=aba^{-1}b^{-1}$ in a free group}.
\newblock {\em Algebra i Logika}, 5:45--50, 1962.

\bibitem{Mandel-Ushakov:2023b}
R.~{Mandel} and A.~{Ushakov}.
\newblock Quadratic equations in metabelian {B}aumslag-{S}olitar groups.
\newblock {\em Int. J. Algebra Comput.}, 33(6):1195--1216, 2023.

\bibitem{Matiyasevich:1973}
Yu. {Matiyasevich}.
\newblock {Real-time recognition of the inclusion relation}.
\newblock {\em Journal of Soviet Mathematics}, 1:64--70, 1973.

\bibitem{Sale:2014b}
A.~{Sale}.
\newblock {The geometry of the conjugacy problem in wreath products and free solvable groups}.
\newblock preprint. Available at \url{http://arxiv.org/abs/1307.6812}.

\bibitem{Sale:2016}
A.~{Sale}.
\newblock {The geometry of the conjugacy problem in lamplighter groups}.
\newblock In {\em Algebra and Computer Science}, volume 677 of {\em Contemporary Mathematics}, pages 171--183. American Mathematical Society, 2016.

\bibitem{Siehler:2012}
J.~{Siehler}.
\newblock {The finite lamplighter groups: a guided tour}.
\newblock {\em The College Mathematics Journal}, 43(3):203--211, 2012.

\bibitem{skipper2019lamplightergroupsbireversibleautomata}
R.~Skipper and B.~Steinberg.
\newblock Lamplighter groups, bireversible automata and rational series over finite rings.
\newblock {\em Groups Geom. Dyn.}, 14(2):567--589, 2020.

\end{thebibliography}

\end{document}